\theoremstyle{definition}
\newtheorem{definition}{Definition}
\newtheorem*{definition*}{Definition}
\theoremstyle{plain}
\newtheorem*{proposition*}{Proposition}
\newtheorem{proposition}{Proposition}
\newtheorem*{corollary*}{Corollary}
\newtheorem{lemma}[proposition]{Lemma}
\newtheorem*{lemma*}{Lemma}
\newtheorem*{theorem*}{Theorem}
\newcommand{\R}{\mathbb{R}}  
\newcommand{\Z}{\mathbb{Z}}  
\title[Properties of the singular virtual braid monoid]{Algebraic, combinatorial and topological properties of singular virtual braid monoids}
\author{Bruno Aar\'on CISNEROS DE LA CRUZ}
\address{Bruno Aar\'on Cisneros de la Cruz, Instituto de Matem\'aticas de la UNAM, Oaxaca - Consejo Nacional de Humanidades Ciencias y Tecnolog\'ias \\ Le\'on No.2 altos \\ Oaxaca de Ju\'arez, M\'exico}
\email{bruno@im.unam.mx}
\author{Guillaume GANDOLFI}
\address{Guillaume Gandolfi, Laboratoire de Math\'{e}matiques Nicolas Oresme - Universit\'e de Caen Normandie, Caen, France}
\email{guillaume.gandolfi@unicaen.fr}
\newcommand{\dg}{\mathrm{deg}}
\begin{document}

\maketitle

\begin{abstract}
In this paper we discuss algebraic, combinatorial and topological properties of singular virtual braids. On the algebraic side we state the relations between classical and virtual singular objects, in addition we discuss a Birman--like conjecture for the virtual case. On the topological and combinatorial side, we prove that there is a bijection between singular abstract braids, horizontal Gauss diagrams and singular virtual braids, in particular using horizontal Gauss diagrams we obtain a presentation of the singular pure virtual braid monoid. 
\end{abstract}

\section{Introduction}

Recently, Caprau, De la Pena and McGahan defined singular virtual braids \cite{CPM} as a generalization of classical singular braids defined by Birman and Baez for the study of Vassiliev invariants \cite{B, Bae}, and virtual braids defined by Vershinin and Kauffman \cite{Ver01, Kauffman}. In \cite{CPM} they proved an Alexander and Markov Theorem for singular virtual braids and gave two presentations for $SVB_n$. Later, Caprau and Zepeda \cite{CZ} constructed a representation of $SVB_n$, and using Reidemeister--Schreier method they found a presentation for the Pure singular virtual braid monoid. In this paper we study the algebraic, combinatorial and topological context of singular virtual braids.

First we show that the singular virtual braid monoid is a natural extension of the singular braid monoid 
and of the virtual braid group
by proving that they are algebraically embedded in it (Proposition \ref{prop:FirstRelations}).

We state a Birman--like conjecture for singular virtual braids; Birman originally defined a map from the singular braid monoid to the braid group algebra called desingularization map, and conjectured that it was injective. This conjecture is known as ``Birman's conjecture". Independently Papadima and Bar-Natan \cite{PAPADIMA2002,BARNATAN} proved that Vassiliev invariants separated braids and it was proved by Zhu in \cite{Z} that if Birman's conjecture was true, then they would also separate singular braids. Later, Birman's conjecture was proved by Paris in \cite{LP}. 
We define the desingularization map for singular virtual braids and we prove that the preimage of $1$ by it is reduced to $\{1\}$ (Proposition \ref{prop}), which leads us to conjecture that it is injective.

We study singular virtual braids from the combinatorial point of view; Goussarov, Polyak and Viro defined Gauss diagrams as a combinatorial approach to study and calculate Vassiliev invariants \cite{GPV}. They discovered that virtual knots are in bijective correspondence with Gauss diagrams, identified up to $\Omega$--moves, which are roughly speaking the counterpart of Reidemeister moves in the context of Gauss diagrams.
We extend the definition of horizontal Gauss diagrams\footnote{In \cite{BACDLC}, they are called braid--Gauss diagrams.} \cite{ABMW,BARNATAN,BACDLC}, which are a braid--like version of Gauss diagrams, identified by their $\Omega$--moves.
We prove that these are in a bijective correspondence with singular virtual braids (Proposition \ref{prop:GaussD}) and as an application we recover the presentation of the pure singular virtual braid monoid given in \cite{CZ} without using Reidemeister--Schreier method (Proposition \ref{prop:PresPSVB}). 

We give a topological interpretation of singular virtual braids; for virtual knots it was given independently by Kauffman \cite{Kauffman} and by Kamada \cite{Kamada-tachi} as knot diagrams on surfaces (up to stable equivalence), called abstract knots, which extends the classical topological knot theory. The first author defined abstract braids and proved that they are a topological interpretation for virtual braids which is compatible with the objects defined by Kauffman and Kamada \cite{BACDLC}.
We extend this notion and define singular abstract braids, and we prove that they are in a bijective correspondence with singular virtual braids (Proposition \ref{abst_bd}).



The paper is organized as follows, in section \ref{Section2} we recall the presentations of the monoids and groups that we use in the paper. Section \ref{Section3} is dedicated to the algebraic context of singular virtual braids i.e. we prove that the virtual braid group and the singular braid monoid embed in the singular virtual braid monoid, and we state a Birman--like conjecture for the virtual case.  In section \ref{Section4} we give a combinatorial description of singular virtual braids as horizontal Gauss diagram and we recover the presentation of the pure singular virtual braid monoid. Finally, in section \ref{Section5} we use horizontal Gauss diagrams to establish a topological realization of singular virtual braids.

\section*{Acknowledgments}

The authors thank Paolo Bellingeri for bringing this subject to them, as well we thank Benjamin Audoux and Luis Paris for the fruitful discussions on the redaction and contents of this paper.

The first author was financed by CONAHCYT-M\'exico under the program of ``C\'atedras CONAHCYT, Proyecto 61", the research project ``Ciencia b\'asica 284621" and ``FORDECYT 265667". The second author's PhD is financed by the region of Normandie, France.

\section{Basic definitions}\label{Section2}

\subsection{Definitions}

\

We start by recalling the presentations of the different monoids and groups considered in this article. Set $n\geq 2$ a natural number.

\begin{definition}
The {\it  braid group on $n$ strands}, $B_n$, is the abstract group generated by $\sigma_1, \dots, \sigma_{n-1}$ with the following relations: 
\begin{itemize}
    \item[(R0)] $\sigma_i\sigma_j=\sigma_j\sigma_i,\ |i-j|\geq 2$,
	\item[(R3)] $\sigma_i\sigma_{i+1}\sigma_i=\sigma_{i+1}\sigma_i\sigma_{i+1},\ i=1,...,n-2$.
\end{itemize}
The {\it virtual braid group on $n$ strands}, $VB_n$, is the abstract group generated by $\sigma_1, \dots, \sigma_{n-1}$ (classical generators), $\rho_1, \dots, \rho_{n-1}$ (virtual generators), relations (R0), (R3), and
\begin{itemize}
	\item[(V1)] $\rho_i\rho_j=\rho_j\rho_i,\ |i-j|\geq 2$,
	\item[(V2)] $\sigma_i\rho_j=\rho_j\sigma_i,\ |i-j|\geq 2$,
	\item[(V3)] $\rho_i^2=1,\ i=1,...,n-1$,
	\item[(V4)] $\rho_i\rho_{i+1}\rho_i=\rho_{i+1}\rho_i\rho_{i+1},\ i=1,...,n-2$,
    	
	\item[(V5)] $\rho_i\sigma_{i+1}\rho_i=\rho_{i+1}\sigma_i\rho_{i+1},\ i=1,...,n-2$;
\end{itemize}
\end{definition}

Let us recall that there is a topological interpretation of braids as isotopy classes of strands embedded in the unital cube of $\R^3$, such that they move monotonically with respect to the $x$-axis, joining $n$--marked points on opposite faces, and that an isotopy is a continuous path of boundary--fixing diffeomorphisms of the unital cube, starting at the identity and preserving monotony of the strands.

Braids can be represented also through diagrams in the unital square of $\R^2$, such that they move monotonically with respect to the $x$-axis, joining $n$--marked points on opposite faces, and identified up to isotopy, which corresponds to relation (R0), and Reidemeister moves, which correspond to relations\footnote{The Reidemeister move (R2) corresponds to the inversibility of the generators $\sigma_i$, which is implicit in the group presentation.} (R2) and (R3) (see Figure \ref{fig:R2R3}). The generator $\sigma_i$ corresponds to the diagram represented on the left of Figure \ref{fig:Gen}, the generator $\sigma_i^{-1}$ is obtained from $\sigma_i$ by making a cross changing.

\begin{figure}[!ht]
  $\sigma_i$ \,\,\, =\,\,\,  \raisebox{-17pt}{\includegraphics[angle=90,height=0.7in,width=0.7in]{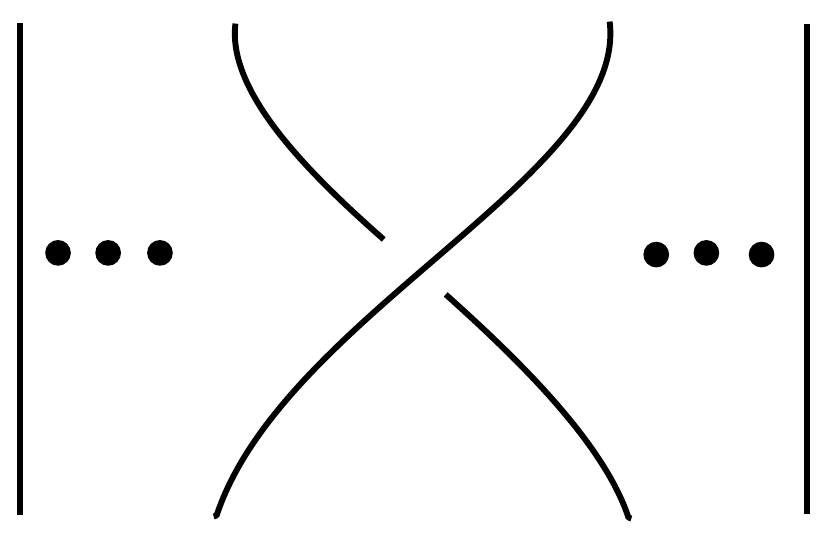}} 
  \hspace{1cm} 
  $\rho_i$ \,\,\,=\,\,\, \raisebox{-17pt}{\includegraphics[angle=90,height=0.7in,width=0.7in]{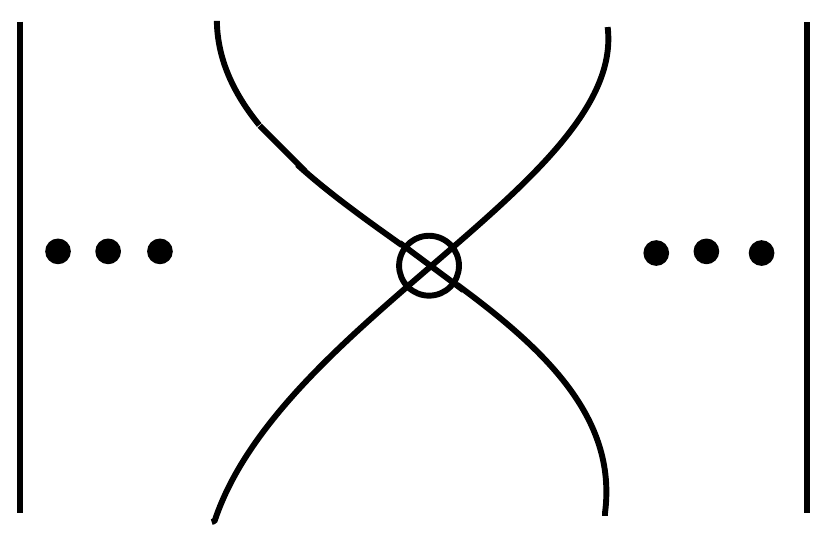}}
   \hspace{1cm} 
  $\tau_i$ \,\,\,=\,\,\, \raisebox{-17pt}{\includegraphics[angle=90,height=0.7in,width=0.7in]{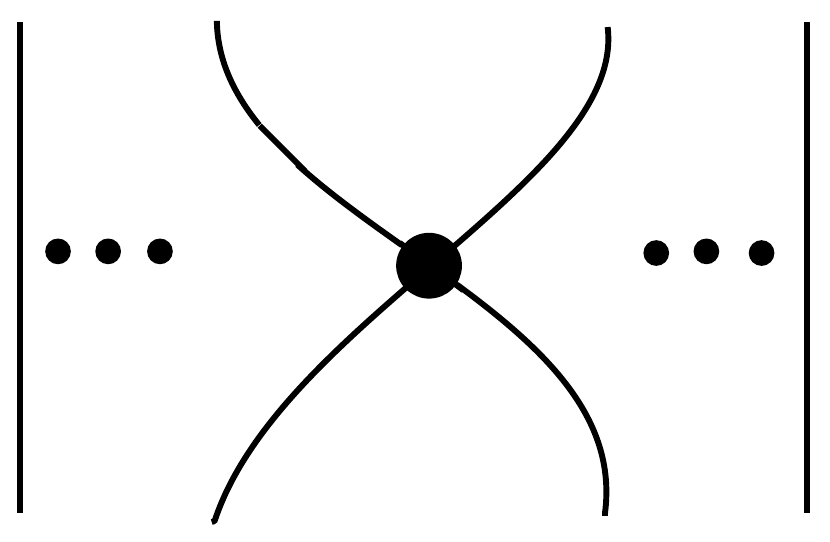}}
\put(-243, 30){\fontsize{7}{7}$1$}
\put(-243, 20){\fontsize{7}{7}$i$}
\put(-243, -6){\fontsize{7}{7}$i+1$}
\put(-243, -17){\fontsize{7}{7}$n$}
\put(-122, 30){\fontsize{7}{7}$1$}
\put(-122, 20){\fontsize{7}{7}$i$}
\put(-122, -6){\fontsize{7}{7}$i+1$}
\put(-122, -17){\fontsize{7}{7}$n$}
\put(0, 30){\fontsize{7}{7}$1$}
\put(0, 20){\fontsize{7}{7}$i$}
\put(0, -6){\fontsize{7}{7}$i+1$}
\put(0, -17){\fontsize{7}{7}$n$}
    \caption{Classical, virtual and singular generators}
    \label{fig:Gen}
\end{figure}

\begin{figure}[!ht]
\raisebox{-0.7cm}{\includegraphics[angle=90,scale=1]{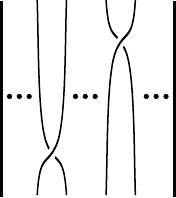}}
\hspace{0.2cm} $\stackrel{R0}{=}$\hspace{0.2cm}
\raisebox{-0.7cm}{\includegraphics[angle=90,scale=1]{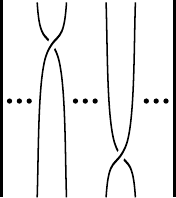}}
\vspace{1cm}\\
\raisebox{-1.6cm}{\includegraphics[angle=90,height=1in,width=0.8in]{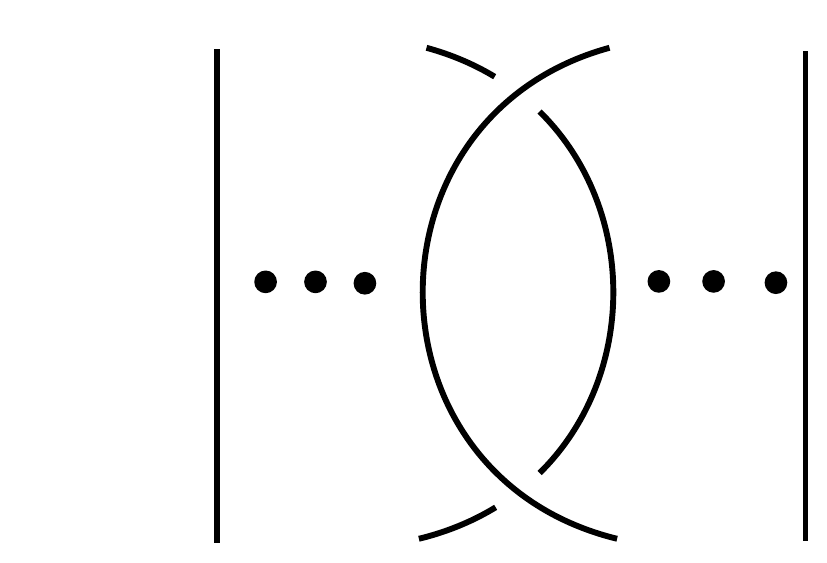}} 
\hspace{0.2cm} $\stackrel{R2}{=}$\hspace{0.2cm}
\raisebox{-1.05cm}{\includegraphics[angle=90,height=0.8in,width=0.9in]{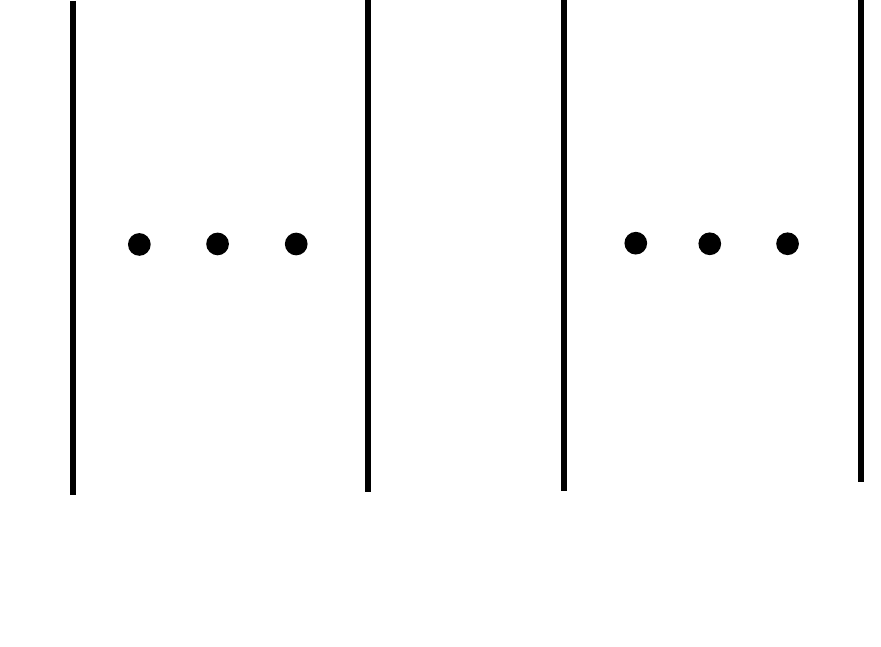}}
\hspace{1cm}
\raisebox{-.8cm}{\includegraphics[angle=90,height=0.7in,width=0.8in]{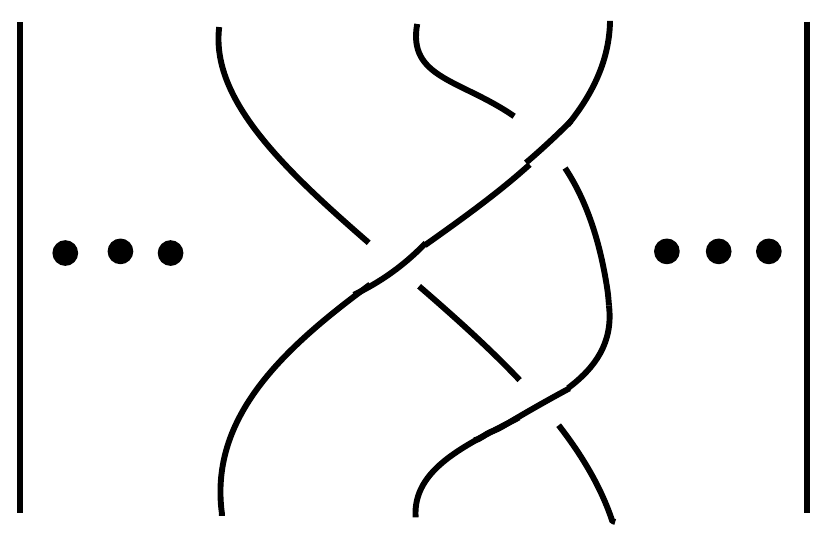}} \hspace{0.2cm} $\stackrel{R3}{=}$ \hspace{0.2cm} 
\raisebox{-.8cm}{\includegraphics[angle=90,height=0.7in,width=0.8in]{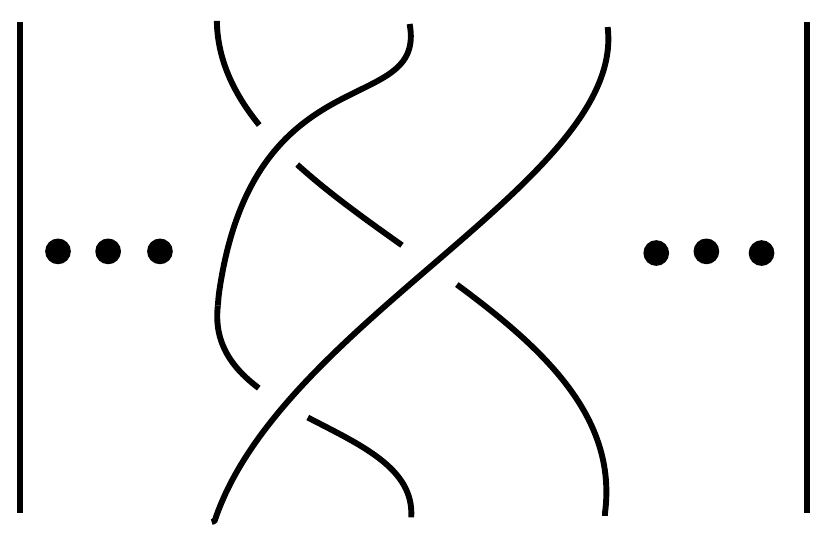}}

\caption{Relations R0, R2 and R3}
\label{fig:R2R3}
\end{figure}

Virtual braids were originally defined through braid-like diagrams, which are braid diagrams but with different labels on the crossings. As in the classical case, locally each crossing corresponds to a generator. Diagrammatically, for virtual braid diagrams, we have classical ($\sigma_i$) and virtual generators ($\rho_i$) illustrated in Figure \ref{fig:Gen}. Virtual braid diagrams are identified up to isotopy and moves (R0),(R2),(R3), isotopy (V1), (V2) and virtual Reidemeister moves (V3) to (V5) (see Figure \ref{fig:Vmoves}).

\begin{figure}[!ht]
\raisebox{-0.7cm}{\includegraphics[angle=90]{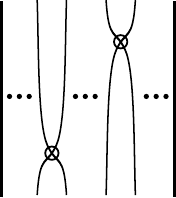}} 
\hspace{0.2cm} $\stackrel{V1}{=}$\hspace{0.2cm}
\raisebox{-0.7cm}{\includegraphics[angle=90]{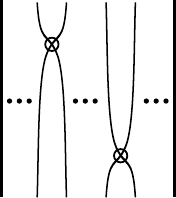}}
\hspace{1cm}
\raisebox{-0.7cm}{\includegraphics[angle=90]{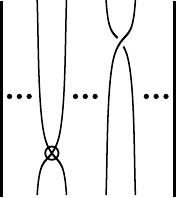}} 
\hspace{0.2cm} $\stackrel{V2}{=}$ \hspace{0.2cm} 
\raisebox{-0.7cm}{\includegraphics[angle=90]{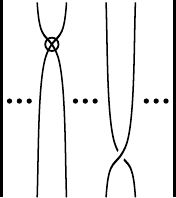}}
\vspace{1cm}\\
\raisebox{-1.6cm}{\includegraphics[angle=90, height=1in, width=0.8in]{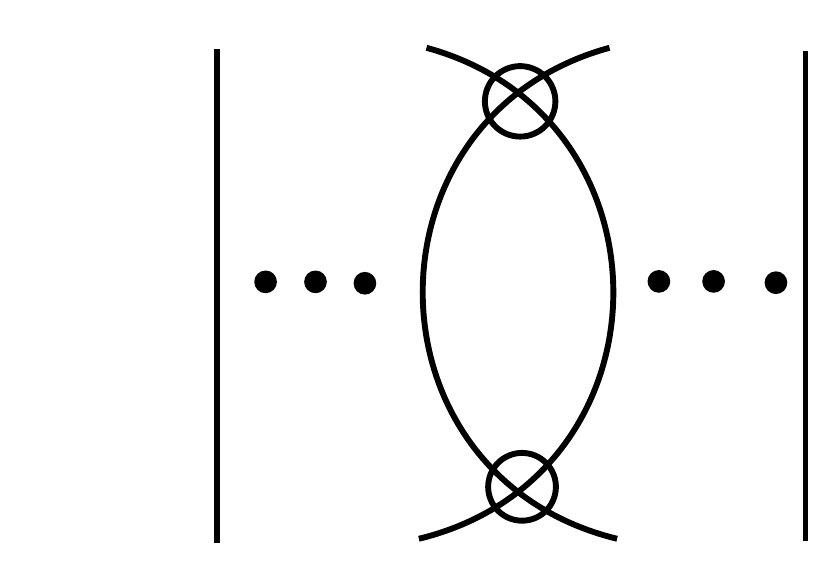}} 
\hspace{0.15cm} $\stackrel{V3}{=}$\hspace{0.25cm}
\raisebox{-1.05cm}{\includegraphics[angle=90, height=0.8in, width=0.8in]{Id_B.pdf}}
\hspace{0.2cm}
\raisebox{-0.8cm}{\includegraphics[angle=90, height=0.7in, width=0.7in]{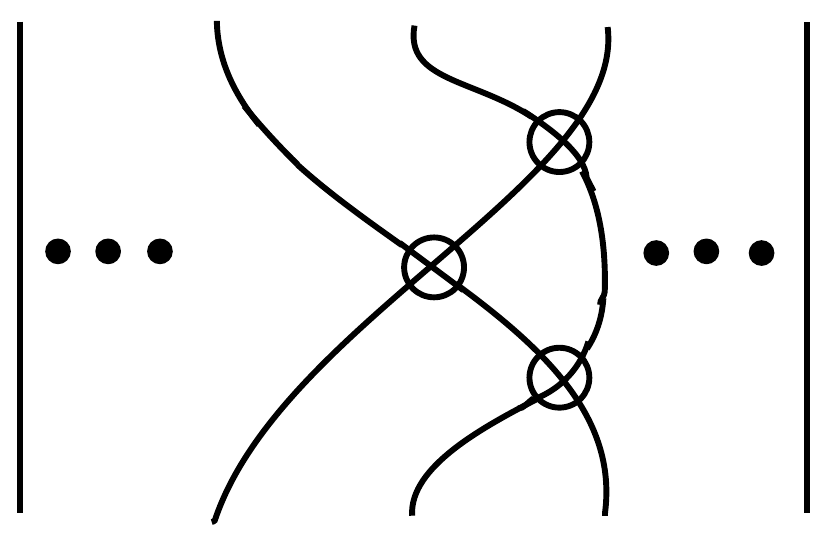}} \hspace{0.15cm} $\stackrel{V4}{=}$ \hspace{0.15cm} 
\raisebox{-.8cm}{\includegraphics[angle=90, height=0.7in, width=0.7in]{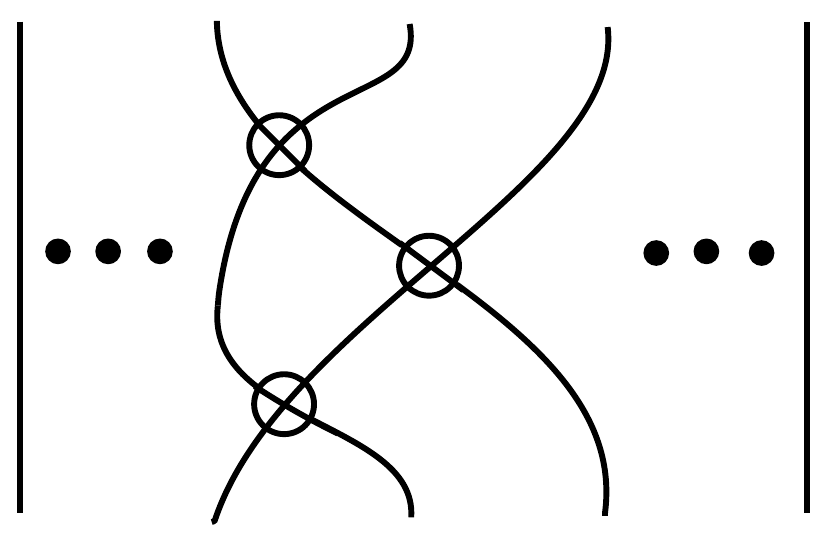}}
\hspace{0.7cm}
\raisebox{-.8cm}{\includegraphics[angle=90, height=0.7in, width=0.7in]{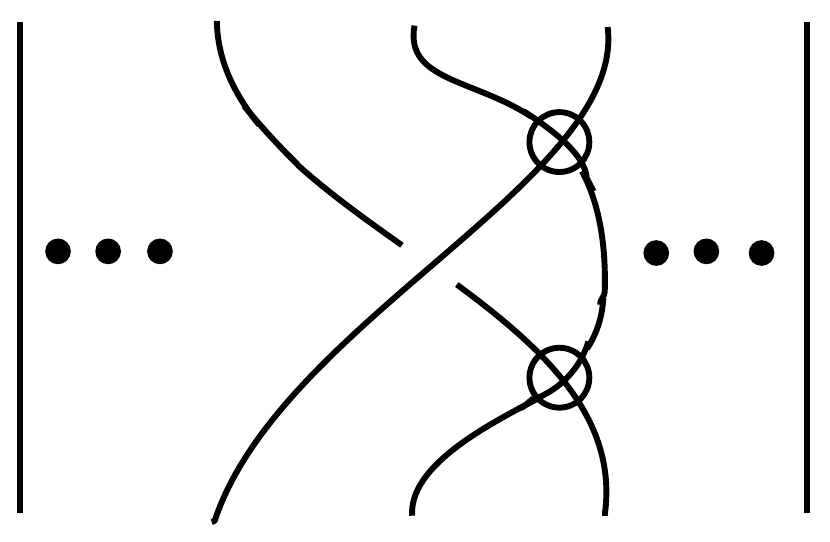}} \hspace{0.15cm} $\stackrel{V5}{=}$ \hspace{0.15cm} 
\raisebox{-.8cm}{\includegraphics[angle=90, height=0.7in, width=0.7in]{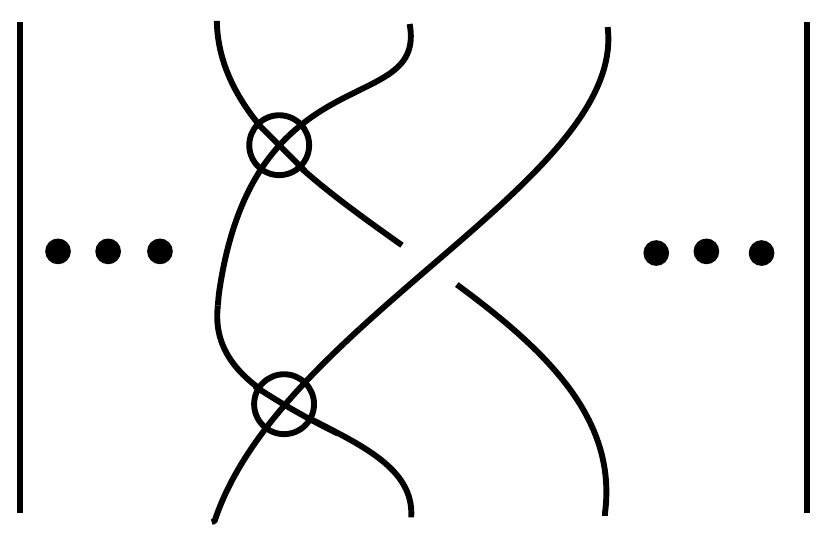}}
\caption{Relations V1 to V5}
\label{fig:Vmoves}
\end{figure}


\begin{definition}
The {\it  singular braid monoid on $n$ strands}, $SB_n$ is the abstract monoid generated by $\sigma_1^{\pm 1}, \dots, \sigma_{n-1}^{\pm 1}$ (classical generators), and $\tau_1, \dots, \tau_{n-1}$ (singular generators), with relations (R0), (R3) and 
\begin{itemize}
    \item[(R2)] $\sigma_i\sigma_i^{-1}=1 = \sigma_i^{-1}\sigma_i$,
    \item[(S1)] $\tau_i\tau_j=\tau_j\tau_i,\ |i-j|\geq 2$,
    \item[(S2)] $\tau_i \sigma_j = \sigma_j \tau_i,\ |i-j|\geq 2$,
    \item[(S3)] $\tau_i\sigma_i=\sigma_i\tau_i,\ i=1,...,n-1$,
    \item[(S4)] $\sigma_i\sigma_{i+1}\tau_i=\tau_{i+1}\sigma_i\sigma_{i+1},\ i=1,...,n-2$;
\end{itemize}
The {\it  singular virtual braid monoid on $n$ strands}, $SVB_n$, is the abstract monoid generated by $\sigma_1^{\pm 1}, \dots, \sigma_{n-1}^{\pm 1}$ (classical generators), $\rho_1, \dots, \rho_{n-1}$ (virtual generators), and $\tau_1, \dots, \tau_{n-1}$ (singular generators), with relations (R0), (R2), (R3), (S1) to (S4), (V1) to (V5) and  
\begin{itemize}
    \item[(SV1)] $\rho_i\tau_j=\tau_j\rho_i,\ |i-j|\geq 2$,
    \item[(SV2)] $\rho_i\tau_{i+1}\rho_i=\rho_{i+1}\tau_i\rho_{i+1},\ i=1,...,n-2$.
\end{itemize}
\end{definition}

Singular braid monoids have a topological counterpart as isotopy classes of strands immersed in the unital cube of $\R^3$, such that they move monotonically respect to the $x$-axis, joining $n$--marked points on opposite faces and their singularities are isolated transverse double points. Notice that, since diffeomorphisms preserve singular points, so does an isotopy.
As in the case of classical braids they admit a diagrammatic representation, then we have two types of crossings, classical ($\sigma_i$) and singular ($\tau_i$), which correspond to the generators illustrated in Figure \ref{fig:Gen}. The diagrams of singular braids are identified up to isotopy and moves (R0), (R2), (R3), isotopy (S1), (S2), move (S3) and the singular Reidemeister move (S4) (see Figure \ref{fig:Smoves}).  

\begin{figure}[!ht]
\hspace{0.2cm}
\raisebox{-0.7cm}{\includegraphics[angle=90]{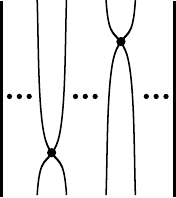}} 
\hspace{0.1cm} $\stackrel{S1}{=}$\hspace{0.1cm}
\raisebox{-0.7cm}{\includegraphics[angle=90]{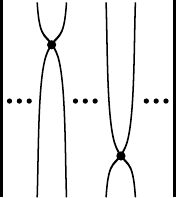}}
\hspace{0.3cm}
\raisebox{-0.7cm}{\includegraphics[angle=90]{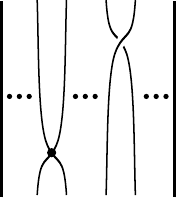}} 
\hspace{0.1cm} $\stackrel{S2}{=}$ \hspace{0.1cm} 
\raisebox{-0.7cm}{\includegraphics[angle=90]{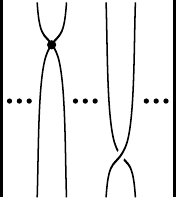}}
\vspace{1cm}\\
\raisebox{-1.3cm}{\includegraphics[angle=90,height=1in,width=0.8in]{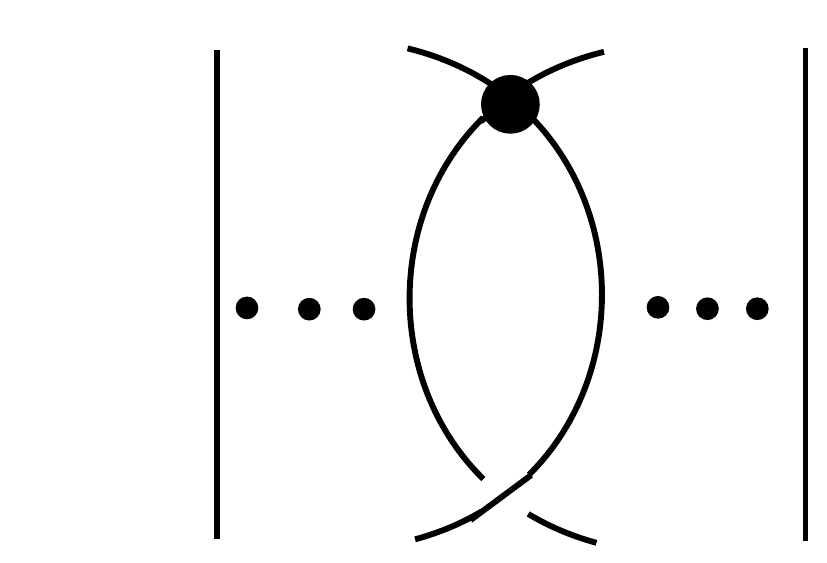}} 
\hspace{0.1cm} $\stackrel{S3}{=}$ \hspace{0.1cm} 
\raisebox{-1.3cm}{\includegraphics[angle=90,,height=1in,width=0.8in]{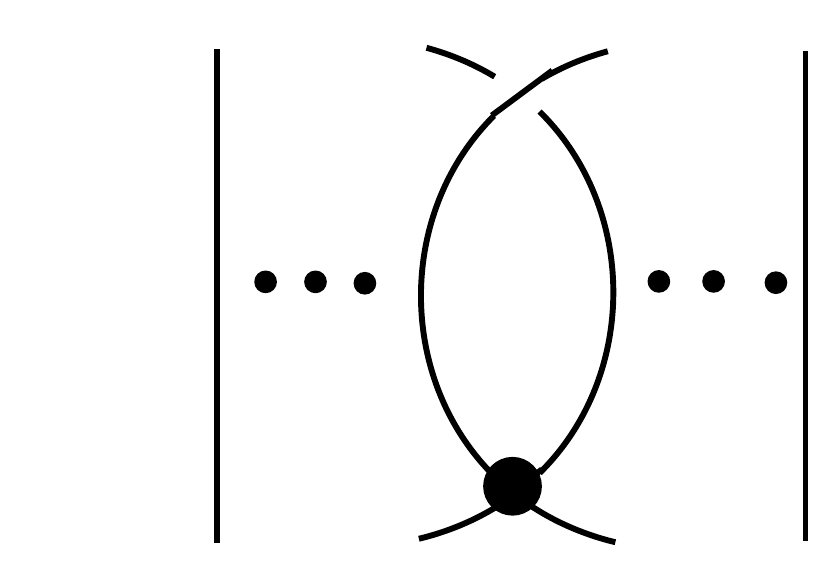}}
\hspace{0.2cm}
\raisebox{-.7cm}{\includegraphics[angle=90, height=0.75in, width=0.8in]{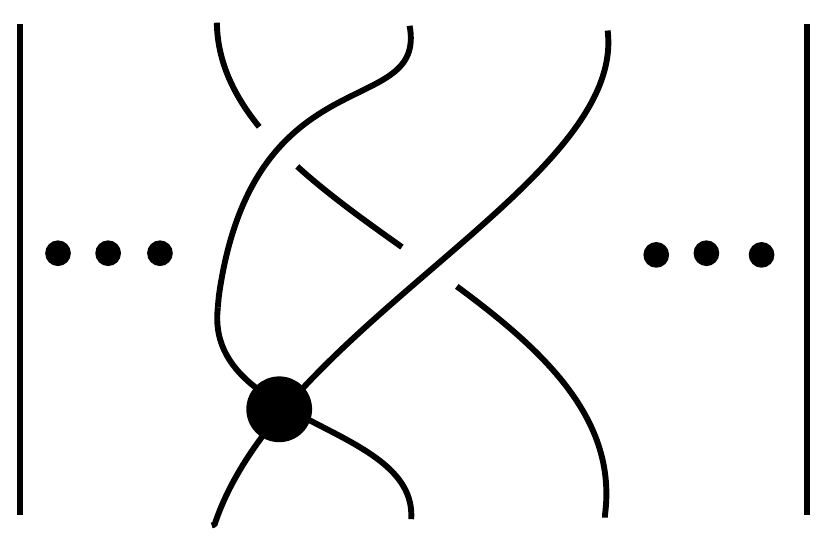}} 
\hspace{0.2cm} $\stackrel{S4}{=}$ \hspace{0.2cm} 
\raisebox{-.7cm}{\includegraphics[angle=90, height=0.75in, width=0.8in]{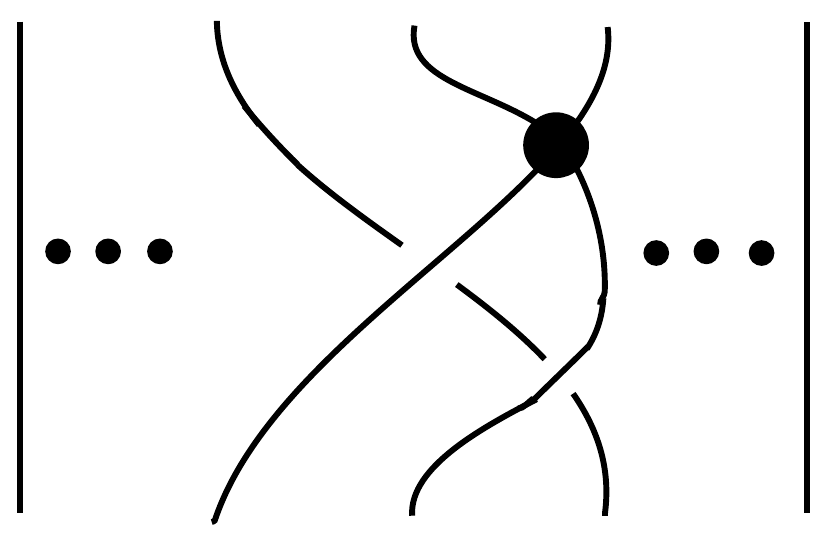}}
\caption{Relations S1 to S4}
\label{fig:Smoves}
\end{figure}

Similarly to the non-singular case, the singular virtual braid monoid is defined through braid-like diagrams, there are three types of crossings, classical ($\sigma_i$), virtual ($\rho_i$) and singular ($\tau_i$), which correspond to the generators illustrated in Figure \ref{fig:Gen}. The diagrams of virtual singular braids are identified up to isotopy and moves (R0), (R2), (R3), (V1) to (V5), (S1) to (S4), isotopy (SV1) and the singular virtual Reidemeister move (SV2) (see Figure \ref{fig:SVmoves}).

\begin{figure}[!ht]
\raisebox{-.7cm}{\includegraphics[angle=90]{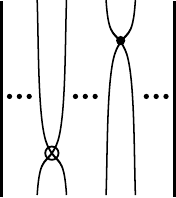}} 
\hspace{0.2cm} $\stackrel{SV1}{=}$ \hspace{0.2cm} 
\raisebox{-.7cm}{\includegraphics[angle=90]{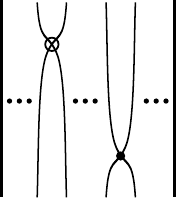}}
\hspace{1cm}
\raisebox{-.7cm}{\includegraphics[angle=90, height=0.7in, width=0.7in]{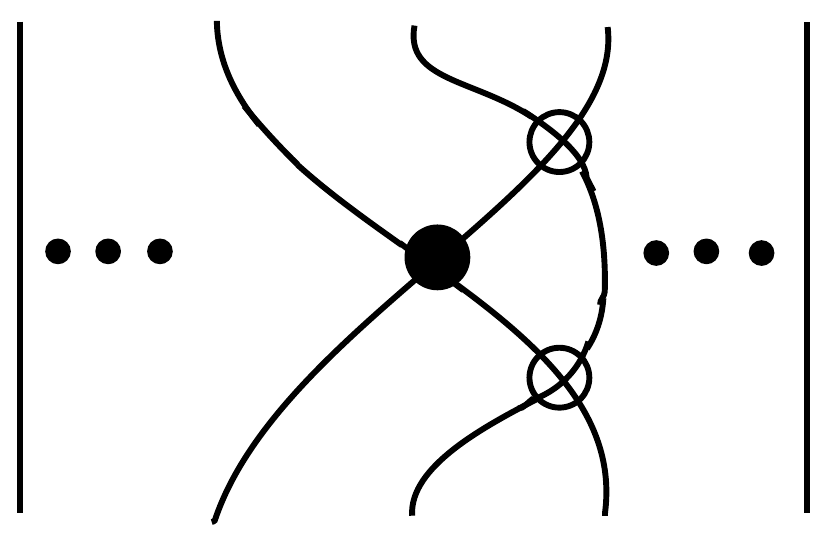}} \hspace{0.2cm} $\stackrel{SV2}{=}$ \hspace{0.2cm} 
\raisebox{-.7cm}{\includegraphics[angle=90, height=0.7in, width=0.7in]{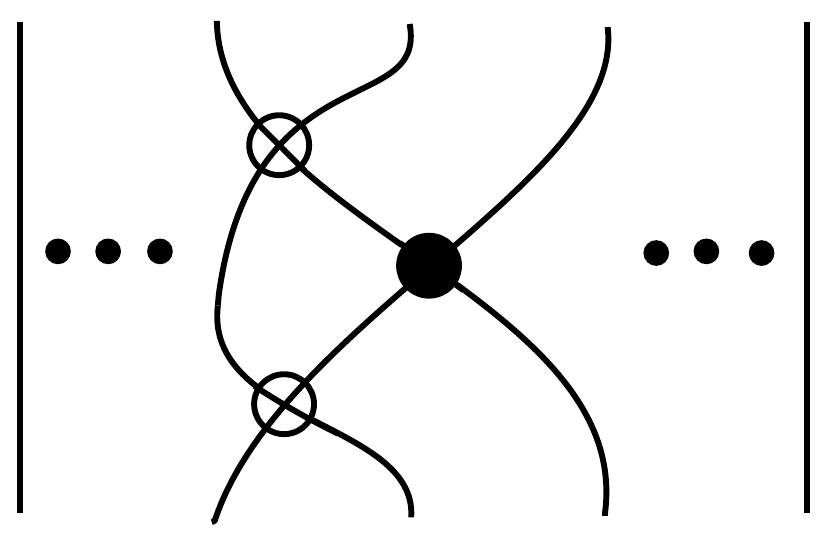}}
\caption{Relations SV1 and SV2}
\label{fig:SVmoves}
\end{figure}

\section{Algebraic properties}\label{Section3}

In this section we discuss the relation between classical, virtual and singular braid objects and we present some evidence to state a Birman--like conjecture for the virtual case. 

\subsection{Relations between the objects}

Consider the following diagram, where each arrow represents the natural morphism between the objects.

\begin{displaymath}
\xymatrix{
VB_n\ar[r] & SVB_n \\
B_n \ar[u] \ar[r] & SB_n \ar[u] .}
\end{displaymath}

\begin{proposition} \label{prop:FirstRelations}
The previous diagram is commutative and each morphism is injective. 
\end{proposition}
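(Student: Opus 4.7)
The plan is to settle commutativity first and then treat the four injectivity claims, from easiest to hardest.

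Commutativity is immediate: all four morphisms in the square are determined on generators by sending $\sigma_i\mapsto\sigma_i$ (and, where applicable, $\rho_i\mapsto\rho_i$, $\tau_i\mapsto\tau_i$), so both composites $B_n\to SVB_n$ send each $\sigma_i$ to $\sigma_i$. The two injections with source $B_n$ can then be cited: $B_n\hookrightarrow VB_n$ is a classical theorem of Godelle and Paris, and $B_n\hookrightarrow SB_n$ is Paris's resolution of Birman's conjecture, both already recalled in the introduction.

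For $VB_n\hookrightarrow SVB_n$, I would exhibit an explicit retraction $r\colon SVB_n\to VB_n$ defined on generators by $\sigma_i\mapsto\sigma_i$, $\rho_i\mapsto\rho_i$ and $\tau_i\mapsto\sigma_i$. A routine verification shows that every defining relation of $SVB_n$ maps to a valid relation in $VB_n$: (S1) and (S2) become instances of (R0), (S3) becomes trivial, (S4) becomes the braid relation (R3), (SV1) becomes (V2), and (SV2) becomes (V5). Since $r$ restricts to the identity on $VB_n$, the inclusion is injective.

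The main obstacle is $SB_n\hookrightarrow SVB_n$, because no assignment of $\rho_i$ to a word in the generators of $SB_n$ satisfies all of the virtual relations simultaneously (for instance $\rho_i\mapsto 1$ breaks (V5) while $\rho_i\mapsto\sigma_i$ breaks (V3)), so no retraction onto $SB_n$ can exist. To sidestep this, I would extend scalars and use the desingularization map
\[\delta\colon SVB_n\longrightarrow \Z[VB_n], \qquad \sigma_i\mapsto\sigma_i,\quad \rho_i\mapsto\rho_i,\quad \tau_i\mapsto\sigma_i-\sigma_i^{-1},\]
anticipated in Proposition \ref{prop}. One checks that $\delta$ respects the defining relations of $SVB_n$: the purely classical and virtual ones are automatic because $VB_n$ sits inside $\Z[VB_n]$, while the relations involving $\tau_i$ reduce to short explicit computations, using (R3) and the identity $\sigma_{i+1}^{-1}\sigma_i\sigma_{i+1}=\sigma_i\sigma_{i+1}\sigma_i^{-1}$ for (S4), and combining (V5) with (V3) to handle both summands in (SV2). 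Once $\delta$ is well defined, its restriction to $SB_n\subset SVB_n$ factors through $\Z[B_n]$ and coincides with the classical Birman desingularization map, which is injective by Paris's theorem. Hence the composite $SB_n\hookrightarrow SVB_n\xrightarrow{\delta}\Z[VB_n]$ is injective, and therefore so is $SB_n\hookrightarrow SVB_n$.
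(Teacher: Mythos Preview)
Your argument is correct, but the route to $SB_n\hookrightarrow SVB_n$ is genuinely different from the paper's. One small correction first: Paris's theorem (Birman's conjecture) is the injectivity of the desingularization map $\eta\colon SB_n\to\Z[B_n]$, not of $B_n\hookrightarrow SB_n$; the latter follows by the same retraction trick you used for $VB_n\hookrightarrow SVB_n$, sending $\tau_i\mapsto\sigma_i$.

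For the hard step, the paper argues structurally: it proves decompositions $SB_n\cong\mathcal{M}\rtimes B_n$ and $SVB_n\cong\mathcal{M}_v\rtimes VB_n$, where $\mathcal{M}$ (resp.\ $\mathcal{M}_v$) is the monoid generated by the $B_n$-conjugates (resp.\ $VB_n$-conjugates) of the $\tau_i$ modulo exactly those commutations that hold in $SB_n$ (resp.\ $SVB_n$). Injectivity then reduces to $\mathcal{M}\hookrightarrow\mathcal{M}_v$, which is shown by pushing a commutation relation through your retraction $\hat\theta$ into $VB_n$, pulling it back to $B_n$ via Kamada's embedding, and then translating $\sigma$'s back to $\tau$'s using the Fenn--Rolfsen--Zhu centralizer theorem. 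Your approach bypasses all of this: the square $\hat\eta\circ\iota=\iota\circ\eta$ together with the injectivity of $\eta$ (Paris) and of $\Z[B_n]\hookrightarrow\Z[VB_n]$ gives the result in one line. This is shorter and conceptually cleaner; the trade-off is that Paris's proof of Birman's conjecture is a substantially deeper input than the FRZ result, so the paper's argument is more elementary in its external dependencies, and its semidirect product decomposition is of independent interest.
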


It is easy to check the commutativity of the diagram. Furthermore, the injectivity of the map from $B_n$ to $VB_n$ was already proved in \cite{Kamada}. We are going to prove the injectivity of the other maps. 
In order to do this, we first need to show that $SB_n$ and $SVB_n$ can be decomposed into certain semidirect products.

Define the two monoids: $$\mathcal{M} := \langle  \; \Upsilon \; | uv = vu, \; u,v\in \Upsilon \;  \text{ and } uv=vu \ \text{in}\ SB_n \rangle^+$$ where $\Upsilon=\{\beta\tau_i\beta^{-1},\ \beta\in B_n,\ 1\leq i\leq n-1\}$,  and $$\mathcal{M}_v := \langle  \; \Upsilon_v \; | uv = vu, \; u,v\in \Upsilon_v  \;  \text{ and } uv=vu \ \text{in}\ SVB_n \;  \rangle^+ $$ where $\Upsilon_v=\{\beta\tau_i\beta^{-1},\ \beta\in VB_n,\ 1\leq i\leq n-1\}$.\\
Notice that $B_n$ (resp. $VB_n$) acts on $\mathcal{M}$ (resp. $\mathcal{M}_v$) by conjugation. 

\begin{lemma} The singular (virtual) braid monoid admit the following decompositions
$$SB_n=\mathcal{M}\rtimes B_n \text{     and     }SVB_n=\mathcal{M}_v\rtimes VB_n.$$
\end{lemma}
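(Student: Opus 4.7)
My plan is to construct, for the classical case, mutually inverse monoid homomorphisms
\[
\Phi\colon \mathcal{M}\rtimes B_n \longrightarrow SB_n,\qquad \Psi\colon SB_n \longrightarrow \mathcal{M}\rtimes B_n;
\]
the virtual statement then follows from the same argument once the additional relations (V1)--(V5), (SV1), (SV2) are added to the checklist. For $\Phi$, I would send $(m,\beta)$ to the product $m\beta$ computed in $SB_n$. Using that $B_n$ acts on $\mathcal{M}$ by conjugation (as already noted in the text), $\Phi$ is a homomorphism:
\[
\Phi\bigl((m_1,\beta_1)(m_2,\beta_2)\bigr) = m_1(\beta_1 m_2\beta_1^{-1})\beta_1\beta_2 = (m_1\beta_1)(m_2\beta_2).
\]
Surjectivity of $\Phi$ is immediate: any word over the generators of $SB_n$ can be brought to the form (singular part)$\,\cdot\,$(classical part) by repeatedly pushing a $\sigma^{\pm 1}$ to the right of a $\tau$ via $\sigma^\varepsilon \tau_i = (\sigma^\varepsilon \tau_i \sigma^{-\varepsilon})\sigma^\varepsilon$.

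For $\Psi$ I would set $\Psi(\sigma_i)=(1,\sigma_i)$ and $\Psi(\tau_i)=(\tau_i,1)$ and verify every defining relation of $SB_n$. The relations (R0), (R2), (R3) live entirely in the $B_n$-slot and are immediate; (S1) amounts to the defining commutation $\tau_i\tau_j = \tau_j\tau_i$ in $\mathcal{M}$; for (S2), (S3), (S4) the two sides reduce to pairs with matching $B_n$-coordinate whose $\mathcal{M}$-coordinates are, respectively,
\[
\tau_i \text{ and } \sigma_j\tau_i\sigma_j^{-1};\qquad \tau_i \text{ and } \sigma_i\tau_i\sigma_i^{-1};\qquad \sigma_i\sigma_{i+1}\tau_i\sigma_{i+1}^{-1}\sigma_i^{-1} \text{ and } \tau_{i+1}.
\]
In each case both members belong to $\Upsilon$ and are equal \emph{in $SB_n$} by the very relation being checked, hence they are the same single-letter element of $\mathcal{M}$. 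So $\Psi$ is a well-defined homomorphism.

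It is then straightforward to see that $\Phi\circ\Psi$ fixes each generator of $SB_n$, and that on a generator $(\gamma\tau_i\gamma^{-1},1)$ of the $\mathcal{M}$-factor one has
\[
\Psi\Phi(\gamma\tau_i\gamma^{-1},1) = (1,\gamma)(\tau_i,1)(1,\gamma^{-1}) = (\gamma\tau_i\gamma^{-1},1),
\]
while $\Psi\Phi(1,\sigma_i) = (1,\sigma_i)$; both composites are therefore the identity, giving $SB_n \cong \mathcal{M}\rtimes B_n$. For the virtual decomposition, the added purely virtual relations (V1)--(V5) stay inside the $VB_n$-slot, and (SV1), (SV2) reduce, exactly as (S2)--(S4) did, to the equality in $\Upsilon_v$ of two conjugates that those relations themselves declare equal in $SVB_n$. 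The one point deserving care -- and the main obstacle to a fully rigorous write-up -- is the well-definedness of $\Psi$, which relies on carefully disentangling equalities happening in $SB_n$ from equalities happening in $\mathcal{M}$; once this is done, every singular relation is handled uniformly by the ``both sides are the same element of $\Upsilon$'' observation above.
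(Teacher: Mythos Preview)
Your proposal is correct and follows essentially the same approach as the paper: construct the two maps $(m,\beta)\mapsto m\beta$ and $\sigma_i\mapsto(1,\sigma_i)$, $\tau_i\mapsto(\tau_i,1)$ (and $\rho_i\mapsto(1,\rho_i)$ in the virtual case), check the defining relations, and observe the composites are identities. The only cosmetic difference is that your $\Phi$ and $\Psi$ are named oppositely to the paper's; your write-up is in fact more detailed than the paper's, which simply asserts that ``it is easy to verify'' the relations and that the composites are the identities.
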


\begin{proof}
We prove that the decomposition for singular virtual braids holds, following \cite{LP} where a similar decomposition is made for the classical singular braid monoid. 

On the free monoid over $\sigma_i^{\pm 1},\rho_i,\tau_i$, for $1\leq i \leq n-1 $, define the homomorphism to $\mathcal{M}_v\rtimes VB_n$ which sends every $\sigma_i, \rho_i$ to $(1,\sigma_i),(1,\rho_i)$, respectively, and every $\tau_i$ to $(\tau_i,1)$. It is easy to verify that the images of the generators satisfy the defining relations of $SVB_n$, thus the homomorphism can be factorized through $SVB_n$.
Call this map $\Phi$. 

On the other hand, define the map from $M_v \rtimes VB_n$ to $SVB_n$ as follows, $(\omega, \beta)$ goes to $\omega \beta$, it is clear that this is a monoid homomorphism, we call it $\Psi$. Notice that $\Psi \circ \Phi = Id_{SVB_n}$ and $\Phi \circ \Psi = Id_{\mathcal{M}_v\rtimes VB_n}$. From this we have the desired decomposition.

\end{proof}

Now we can finish the proof of Proposition \ref{prop:FirstRelations}.

\begin{proof}
\
The injection of $B_n$ (resp $VB_n$) in $SB_n$ (resp. in $SVB_n$) follows from these decompositions. It remains to prove the injection of $SB_n$ in $SVB_n$, which is reduced to the problem of the injectivity of the natural maps $B_n\rightarrow VB_n$ and $\kappa: \mathcal{M}\rightarrow \mathcal{M}_v$. The injectivity of the first map has already been mentioned, so we only need to prove the injectivity of $\kappa$.

The core of the proof is to show that if $\beta_0,\beta_1\in B_n$ and $1\leq i,j\leq n-1$ then the following are equivalent\footnote{Actually, only $(1)\Rightarrow (2)$ is needed but the converse is obvious.}:
\begin{itemize}
\item[(1)] $(\beta_0\tau_i\beta_0^{-1})(\beta_1\tau_j\beta_1^{-1})=(\beta_1\tau_j\beta_1^{-1})(\beta_0\tau_i\beta_0^{-1})$ holds in $SVB_n$
\item[(2)] $(\beta_0\tau_i\beta_0^{-1})(\beta_1\tau_j\beta_1^{-1})=(\beta_1\tau_j\beta_1^{-1})(\beta_0\tau_i\beta_0^{-1})$ holds in $SB_n$.
\end{itemize}

Let $\hat{\theta} : SVB_n\rightarrow VB_n$ be the map defined as $\hat{\theta} (\sigma_i) = \sigma_i$, $\hat{\theta} (\rho_i) = \rho_i$ and $\hat{\theta} (\tau_i) = \sigma_i$. It is easy to verify that $\hat{\theta}$ is a well-defined monoid homomorphism using the defining relations of $SVB_n$ and $SB_n$. Now suppose that $(1)$ holds, then $\hat{\theta}((\beta_0\tau_i\beta_0^{-1})(\beta_1\tau_j\beta_1^{-1}))=\hat{\theta}((\beta_1\tau_j\beta_1^{-1})(\beta_0\tau_i\beta_0^{-1}))$ i.e.:
$$(\beta_0\sigma_i\beta_0^{-1})(\beta_1\sigma_j\beta_1^{-1})=(\beta_1\sigma_j\beta_1^{-1})(\beta_0\sigma_i\beta_0^{-1})\text{ in }VB_n.$$
Since $B_n$ embeds in $VB_n$, the equality holds in $B_n$ and therefore in $SB_n$. Theorem 7.1 from \cite{FRZ} states that for every singular braid $\beta$ and $1\leq i,j\leq n-1$ then $\beta\sigma_i=\sigma_j\beta$ if and only if $\beta\tau_i=\tau_j\beta$ and as a consequence we have (in $SB_n$):
$$\begin{array}{cc}
 & (\beta_0\sigma_i\beta_0^{-1})(\beta_1\sigma_j\beta_1^{-1})=(\beta_1\sigma_j\beta_1^{-1})(\beta_0\sigma_i\beta_0^{-1}) \\
\Leftrightarrow & \sigma_i(\beta_0^{-1}\beta_1\sigma_j\beta_1^{-1}\beta_0) = (\beta_0^{-1}\beta_1\sigma_j\beta_1^{-1}\beta_0)\sigma_i \\
\Leftrightarrow & \tau_i(\beta_0^{-1}\beta_1\sigma_j\beta_1^{-1}\beta_0) = (\beta_0^{-1}\beta_1\sigma_j\beta_1^{-1})\beta_0)\tau_i \\
\Leftrightarrow & (\beta_0\tau_i\beta_0^{-1})(\beta_1\sigma_j\beta_1^{-1})=(\beta_1\sigma_j\beta_1^{-1})(\beta_0\tau_i\beta_0^{-1}) \\
\Leftrightarrow & (\beta_1^{-1}\beta_0\tau_i\beta_0^{-1}\beta_1)\sigma_j=\sigma_j(\beta_1^{-1}\beta_0\tau_i\beta_0^{-1}\beta_1) \\
\Leftrightarrow & (\beta_1^{-1}\beta_0\tau_i\beta_0^{-1}\beta_1)\tau_j=\tau_j(\beta_1^{-1}\beta_0\tau_i\beta_0^{-1}\beta_1) \\
\Leftrightarrow & (\beta_0\tau_i\beta_0^{-1})(\beta_1\tau_j\beta_1^{-1})=(\beta_1\tau_j\beta_1^{-1})(\beta_0\tau_i\beta_0^{-1}).
\end{array}$$
Hence the result.
\end{proof}

\subsection{Birman--like conjecture}
\

Recall that if $M$ is a monoid then $\mathbb{Z}[M]$ is the $\mathbb{Z}$-algebra whose underlying module is the free $\mathbb{Z}$-module over the elements $M$ and which is endowed with the multiplication obtained by extending bilinearly the multiplication of $M$. Recall also that $\mathbb{Z}$ can be identified with $\mathbb{Z}.e$ where $e$ denotes the identity element of $M$. 

Let $\eta: SB_n \rightarrow \Z[B_n]$ be the {\it desingularization map} defined by  $\eta(\tau_i) = \sigma_i - \sigma_i^{-1}$ and $\eta(\sigma_i^{\pm 1})= \sigma_i^{\pm 1}$. This map was defined by Birman and Baez \cite{B, Bae} in order to study finite type invariants. Birman conjectured that this map was injective and this was proved by Paris in \cite{LP}.  We can define a similar map in the virtual case as follows. Let $\hat{\eta}: SVB_n \rightarrow \Z[VB_n]$ defined by $\hat{\eta}(\tau_i) = \sigma_i - \sigma_i^{-1}$, $\hat{\eta}(\sigma_i^{\pm 1}) = \sigma_i^{\pm 1}$ and $\hat{\eta}(\rho_i) = \rho_i$. We obtain the following commutative diagram. 

\begin{displaymath}
\xymatrix{
SVB_n\ar[r]^{\hat{\eta}} & \Z[VB_n] \\ 
SB_n \ar @{^{(}->}[u] \ar@{^{(}->}[r]^{\eta} & Z[B_n] \ar@{^{(}->}[u] }
\end{displaymath}

Let us recall that for monoid homomorphisms, contrary to group homomorphisms, having a trivial kernel is a necessary but not sufficient condition to be injective. Therefore the following proposition gives only a partial answer to the question of the injectivity of $\eta$. It is strongly inspired from \cite{FRZ}, where the classical case is treated. 


\begin{proposition}\label{prop}
\
Let $a\in \mathbb{Z}$, then:
$$\hat{\eta}^{-1}(\{a\})=
\begin{cases}
1\ \text{if}\ a=1,\\
\emptyset \ \text{otherwise.}
\end{cases}$$
\end{proposition}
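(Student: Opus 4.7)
The plan is to use the semidirect-product decomposition $SVB_n = \mathcal{M}_v \rtimes VB_n$ from the preceding lemma together with the exponent-sum homomorphism on $VB_n$, in the spirit of \cite{FRZ}.

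First, I would write an arbitrary element of $SVB_n$ in the form $\omega\beta$, where $\beta \in VB_n$ and $\omega = \prod_{j=1}^{k}\beta_j\tau_{i_j}\beta_j^{-1}$ is a product of $k \geq 0$ generators of $\mathcal{M}_v$. The integer $k$ is intrinsic to $\omega\beta$: it is the image of $\omega\beta$ under the monoid homomorphism $\ell\colon SVB_n\to(\mathbb{N},+)$ sending each $\tau_i$ to $1$ and all $\sigma_i^{\pm 1}, \rho_i$ to $0$, which one checks is compatible with the defining relations of $SVB_n$.

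Second, I would compute $\hat{\eta}(\omega\beta)$ explicitly in $\mathbb{Z}[VB_n]$. Since $\hat{\eta}$ fixes the $\sigma_i^{\pm 1}$ and $\rho_i$ and sends $\tau_i$ to $\sigma_i - \sigma_i^{-1}$, expanding gives
$$\hat{\eta}(\omega\beta) = \prod_{j=1}^{k}\bigl(\beta_j\sigma_{i_j}\beta_j^{-1} - \beta_j\sigma_{i_j}^{-1}\beta_j^{-1}\bigr)\cdot\beta = \sum_{\varepsilon \in \{\pm 1\}^k}\Bigl(\prod_{j=1}^{k}\varepsilon_j\Bigr)\, b_\varepsilon,$$
where $b_\varepsilon := \beta_1\sigma_{i_1}^{\varepsilon_1}\beta_1^{-1}\cdots\beta_k\sigma_{i_k}^{\varepsilon_k}\beta_k^{-1}\beta \in VB_n$.

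Third, I would apply the exponent-sum homomorphism $e\colon VB_n\to\mathbb{Z}$ with $e(\sigma_i)=1$ and $e(\rho_i)=0$; it is well-defined on $VB_n$ because (R0), (R3) and (V1)--(V5) all preserve the signed count of $\sigma$-letters. Then $e(b_\varepsilon)=\sum_{j}\varepsilon_j + e(\beta)$, a quantity depending only on $\sum_j\varepsilon_j$; it attains its strict maximum $k$ only for the all-$+1$ tuple $\varepsilon^+$ and its strict minimum $-k$ only for the all-$-1$ tuple $\varepsilon^-$.

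Finally, assume $\hat{\eta}(\omega\beta)=a\cdot 1_{VB_n}$. Because $b_{\varepsilon^+}$ has strictly greater exponent sum than any other $b_\varepsilon$, it cannot coincide in $VB_n$ with any other term of the expansion, so after collecting it appears with coefficient exactly $+1$; similarly $b_{\varepsilon^-}$ appears with coefficient $(-1)^k$. For $\hat{\eta}(\omega\beta)$ to be a scalar multiple of $1_{VB_n}$ both $b_{\varepsilon^\pm}$ must equal $1_{VB_n}$, which forces $k+e(\beta)=0$ and $-k+e(\beta)=0$, hence $k=0$. Then $\omega$ is the empty product and $\hat{\eta}(\omega\beta)=\beta$ in $VB_n\subset\mathbb{Z}[VB_n]$; for this to equal $a$ we need $\beta=1_{VB_n}$ and $a=1$, yielding the unique preimage $1_{SVB_n}$ when $a=1$ and no preimage otherwise. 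The main obstacle is justifying that the extremal terms $b_{\varepsilon^\pm}$ really survive the collection step with non-zero coefficients; this rests entirely on $e$ being a homomorphism (so that $\beta_j$ and $\beta_j^{-1}$ contribute zero net exponent to each conjugate) and on the strict uniqueness of the extreme values of $\sum_j \varepsilon_j$. Everything else is a direct computation modeled on the classical argument of \cite{FRZ}.
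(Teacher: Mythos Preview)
Your argument is correct and follows essentially the same line as the paper's: both expand $\hat{\eta}$ of a singular virtual braid as a signed sum of $2^k$ elements of $VB_n$ and use the exponent-sum homomorphism (what the paper calls $\mathrm{deg}$) to show that the two extremal terms cannot cancel, which forces the number $k$ of singularities to vanish. The only cosmetic difference is that you first place the element in the normal form $\omega\beta$ coming from the decomposition $SVB_n=\mathcal{M}_v\rtimes VB_n$, whereas the paper works directly with an arbitrary word and isolates the extremal-degree terms by an induction on the number of singularities.
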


To prove this proposition, we define a degree on elements of $SVB_n$.

\begin{definition}
Let $\dg$ be the monoid homomorphism from $SVB_n$ onto $(\mathbb{Z},+)$ that maps every $\sigma_i^{\pm 1}$ to $\pm 1$ and the other generators to 0. The image by $\dg$ of an element in $SVB_n$ is called the {\it degree} of this element.
\end{definition}

Notice that for every defining relation for $SVB_n$ the number of singularity in the right-hand side of the equation is the same as in the left-hand side. As a consequence, two words on the generators of $SVB_n$ representing the same element always have the same number of singularities. The number of singularities of a singular virtual braid is therefore well-defined, as well as the following definition :

\begin{definition}
For every $d\in\mathbb{N}$, we denote by $S_dVB_n$ the subset of $SVB_n$ constituted of braids with exactly $d$ singularities.
\end{definition}

The following shows an example of the decomposition of $\eta(\beta)$ in $\Z[VB_n]$ of a braid $\beta\in VB_n$.

Let $\omega = \rho_1 \sigma_2^{-1} \tau_1 \rho_2\sigma_2 \tau_2\in S_2VB_3$, then 
\[\begin{split}
\hat{\eta}(\omega) & = \rho_1 \sigma_2^{-1} \sigma_1 \rho_2\sigma_2 \sigma_2 \\
                & \quad - \rho_1 \sigma_2^{-1} \sigma_1 \rho_2\sigma_2 \sigma_2^{-1}  \\
                & \quad -\rho_1 \sigma_2^{-1} \sigma_1^{-1} \rho_2\sigma_2 \sigma_2 \\
                & \quad + \rho_1 \sigma_2^{-1} \sigma_1^{-1} \rho_2\sigma_2\sigma_2^{-1}.
 \end{split}\]
 
Notice that $\hat{\eta}(\omega)$ is a sum of $2^2$ elements, that  $\mathrm{deg}(\omega)= 0$, that the degree of $\rho_1 \sigma_2^{-1} \sigma_1^{-1} \rho_2\sigma_2\sigma_2^{-1}$ is $0-2$, that the degree of $\rho_1 \sigma_2^{-1} \sigma_1 \rho_2\sigma_2 \sigma_2$ is $0+2$ and that those are the unique elements of maximal and minimal degree in the sum obtained by $\hat{\eta}$. 
This is stated for the general case in the next lemma.

\begin{lemma}
Let $\beta\in S_dVB_n$, with $\text{deg}(\beta)= s$, then:
\begin{itemize}
\item[1)] $\hat{\eta}(\beta)=\sum\limits_{i=1}^{2^d}a_i\alpha_i$ where $a_i\in\mathbb{Z}$ and $\alpha_i\in VB_n$,
\item[2)]there exist unique $k,l$ such that $\dg(\alpha_k)=s-d$ and $\dg(\alpha_l)=s+d$,
\item[3)]for every $i\ne\ k,l$, we have $s-d < \dg(\alpha_i) < s+d$.
\end{itemize}
\end{lemma}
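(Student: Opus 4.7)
The plan is to pick a word representative for $\beta$ containing exactly $d$ occurrences of singular generators and then distribute $\hat{\eta}$ bilinearly. Concretely, if $\beta$ is represented by a word of the form $u_0\tau_{i_1}u_1\tau_{i_2}\cdots u_{d-1}\tau_{i_d}u_d$ where each $u_k$ is a word in the classical and virtual generators only, then
$$\hat{\eta}(\beta)=u_0(\sigma_{i_1}-\sigma_{i_1}^{-1})u_1\cdots(\sigma_{i_d}-\sigma_{i_d}^{-1})u_d\in\mathbb{Z}[VB_n].$$
Expanding the product gives $2^d$ terms, indexed by the choices $\epsilon=(\epsilon_1,\ldots,\epsilon_d)\in\{0,1\}^d$: the $\epsilon$-term is $a_\epsilon\alpha_\epsilon$, where $\alpha_\epsilon\in VB_n$ is the class of the word obtained from the chosen representative by replacing $\tau_{i_k}$ with $\sigma_{i_k}$ when $\epsilon_k=0$ and with $\sigma_{i_k}^{-1}$ when $\epsilon_k=1$, and $a_\epsilon=(-1)^{|\epsilon|}$ with $|\epsilon|=\epsilon_1+\cdots+\epsilon_d$. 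This directly yields item~(1).

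For items~(2) and~(3), I would exploit that $\dg$ is a well-defined monoid homomorphism on $SVB_n$, and in particular restricts to a homomorphism on $VB_n$ via the embedding of Proposition~\ref{prop:FirstRelations}. Since $\dg(\tau_i)=0$ and $\dg(\sigma_i^{\pm 1})=\pm 1$, the contribution of each replacement to the degree is $+1$ when $\epsilon_k=0$ and $-1$ when $\epsilon_k=1$, so
$$\dg(\alpha_\epsilon)=s+(d-|\epsilon|)-|\epsilon|=s+d-2|\epsilon|.$$
This quantity is maximal precisely when $|\epsilon|=0$, yielding $\dg(\alpha_\epsilon)=s+d$, and minimal precisely when $|\epsilon|=d$, yielding $\dg(\alpha_\epsilon)=s-d$; in each case the extremum is attained for a single $\epsilon$, which produces the unique indices $k$ and $l$ of (2). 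For every other $\epsilon$ one has $0<|\epsilon|<d$, hence $s-d<s+d-2|\epsilon|<s+d$, giving (3).

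There is no serious obstacle here: the argument is essentially bilinear distribution in $\mathbb{Z}[VB_n]$ followed by a bookkeeping of $\pm 1$ contributions to $\dg$. The only subtle point worth emphasising is that $\dg(\alpha_\epsilon)$ must not depend on the word representative chosen for $\beta$, but this is guaranteed by the fact that $\dg$ is well-defined on $SVB_n$, which is precisely why both the degree homomorphism and the invariance of the number of singularities were set up immediately before the lemma.
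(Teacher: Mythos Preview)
Your argument is correct and follows essentially the same approach as the paper: choose a word representative, expand $\hat{\eta}$ bilinearly over the $d$ singular letters, and track the $\pm 1$ contributions to $\dg$. The only cosmetic difference is that the paper packages the same computation as an induction on $d$ (peeling off the last singular generator at each step), whereas you expand all $d$ factors at once and index the $2^d$ terms by $\epsilon\in\{0,1\}^d$; the degree bookkeeping $\dg(\alpha_\epsilon)=s+d-2|\epsilon|$ is identical in content.
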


\begin{proof}
We prove the proposition by induction on $d$:

If $\beta\in S_0VB_n=VB_n$ then $\hat{\eta}(\beta)=\beta$ and the result is trivial.

Assume the result holds for every $\beta\in S_dVB_n$ and then take $\beta\in S_{d+1}VB_n$. Let 
$\beta=x_1...x_m$ where the $x_i$ are generators of $SVB_n$ and let $j$ be such that $x_j=\tau_i$ for some $i\in \{1,\dots, n-1\}$ and $x_k\notin \{\tau_1,...,\tau_{n-1}\}$ for $k>j$. Then $\beta'=x_1...x_{j-1}$ has exactly $d$ singularities and therefore, by induction hypothesis, $\hat{\eta}(\beta')$ can be written as $a'_1\alpha'_1+...+a'_{2^d}\alpha'_{2^d}$ where the elements $\alpha'_{min},\ \alpha'_{max}$ of minimum and maximum degree have respectively degree $s'-d$ and $s'+d$ with $s'=\dg(\beta')=\dg(\beta)-\dg(x_j...x_m)$ and every other element has degree strictly between $s'-d$ and $s'+d$.
It follows that:
$$\hat{\eta}(\beta'x_j)=\hat{\eta}(\beta'\tau_i)=\sum\limits_{i=1}^{2^d}a'_i\alpha'_i\sigma_i-\sum\limits_{i=1}^{2^d}a'_i\alpha'_i\sigma_i^{-1},$$
and that $\varphi(\beta'x_j)$ is a sum of $2^{d+1}$ terms, each of which having a degree strictly between $s'-d-1$ and $s'+d+1$ except for $\alpha'_{min}\sigma_i^{-1},\ \alpha'_{max}\sigma_i$ which have respectively degree $s'-d-1$ and $s'+d+1$.
Finally, we get that $\hat{\eta}(\beta)=\hat{\eta}(\beta'x_j)x_{j+1}...x_m$ is a sum of $2^{d+1}$ terms each of which having a degree strictly between
$$s'-(d+1)+\dg(x_{j+1}...x_m)=s-(d+1)$$
and
$$s'+(d+1)+\dg(x_{j+1}...x_m)=s+(d+1)$$
 except for $\alpha'_{min}\sigma_i^{-1}x_{j+1}...x_m$ and $ \alpha'_{max}\sigma_i x_{j+1}...x_m$ which have respectively degree $s-(d+1)$ and $s+(d+1)$.
\end{proof}

Now we can prove Proposition \ref{prop}.

\begin{proof}
Let $a\in\mathbb{Z}$ and let $\beta\in SVB_n$ such that $\hat{\eta}(\beta)=1$. If $s$ and $d$ denote respectively the number of singularities and the degree of $\beta$, then by the previous proposition we have $s+d=s-d=0$, which implies that $d=0$, that is $\beta$ has no singularity and therefore that $\beta =\hat{\eta}(\beta)=a$. Since $SVB_n\cap\mathbb{Z}=\{1\}$, if $a\ne 1$ we end up with a contradiction and if $a=1$ then $\beta=1$. Hence the result.
\end{proof}


This results motivates the following Birman--like conjecture in the virtual case. \\

\noindent
{\bf Conjecture.} {\it  The desingularization map $\hat{\eta}: SVB_n \rightarrow \Z[VB_n]$ is injective.} \\

Finally, consider the linear extension $H$ of $\eta$ to $\Z[SB_n]$; this map was proved not to be injective in \cite{FRZ}. It follows that the extension $\hat{H}$ of $\hat{\eta}$ to $\Z[SVB_n]$ is not injective since it coincides with $H$ on $\Z [B_n]$, as shown in the following diagram :
\begin{displaymath}
\xymatrix{
\Z [SVB_n]  \ar[r]^{\hat{H}} & \Z [VB_n] \\
\Z [SB_n]  \ar@{^{(}->}[u] \ar[r]^{H} & \Z [B_n].\ar@{^{(}->}[u]}
\end{displaymath}

\section{Combinatorial properties of $SVB_n$} \label{Section4}

\subsection{Singular Gauss diagrams}

\

In \cite{BACDLC}, it is proved that the set of virtual braids are in a bijective correspondence with the set of stable equivalence classes of abstract braids, the bijection given there passes through horizontal Gauss diagrams.  In this section we extend the definition of horizontal Gauss diagrams to the singular case, and we prove that they are in bijective correspondence with singular virtual braids.  As an algebraic application of this bijection, we recover the presentation of the virtual pure singular braid monoid given by Caprau and Zepeda \cite{CZ}. 

\begin{definition}\label{def:GaussD}
A {\it  singular horizontal Gauss diagram on $n$ strands} $G$ is a 4--tuple $(I,A,S,\pi)$ where $I$ is a collection of $n$ oriented disjoint intervals embedded in the plane, such that they parallel to each other and ordered from top to bottom, they are called {\it underlying intervals}, $A$ is a finite set of signed arrows, $S$ is a finite set of unsigned arrows (we call unsigned arrows simply by arrows) and $\pi\in S_n$, where $S_n$ is the set of permutations of $\{1,\dots,n\}$, such that: 
\begin{enumerate}
	\item each (signed) arrow has its endpoints in the interior of two different underlying intervals,
	\item (signed) arrows are pairwise disjoint,
	\item the endpoint of the $i$-th underlying interval is labelled with $\pi(i)$. 
\end{enumerate}

\end{definition}

\noindent {\bf Remark. }Notice that condition (2) in Definition \ref{def:GaussD} implies that we can draw the arrows perpendicular to the underlying intervals, up to reparametrization of the underlying intervals. When we draw underlying intervals vertically, then arrows are horizontal. This is why they are called horizontal Gauss diagrams. In this paper we draw underlying intervals horizontally.

\begin{definition}
Let $D_1$ and $D_2$ be two singular horizontal Gauss diagrams. We say that $D_1$ and $D_2$ are related by an {\it  $\Omega$--move} if $D_1$ has a subdiagram equivalent to one of the diagrams shown on right (respectively left) side of pictures (A), (B), (C) and (D) of Figure \ref{fig:omegamoves}, and replacing this subdiagram for the one shown on the left (respectively right) side of the picture gives $D_2$.
Observe that in pictures (A), (B), (C) and (D), $i,j,k$ may not be consecutive, nor in increasing order.
Each move is labelled according to the subdiagrams that we change, i.e. we have four $\Omega$--moves: $\Omega 2$, $\Omega 3$, $S \Omega 2$ and $S \Omega 3$--moves\footnote{Notice that as in the case of (virtual) braids, we can have an $\Omega 3$--like move with different signs on the arrows by composing different $\Omega$--moves.}.
\end{definition}

\begin{figure}[!ht]
\subfloat[$\Omega 2$--move]{\includegraphics[scale=0.5]{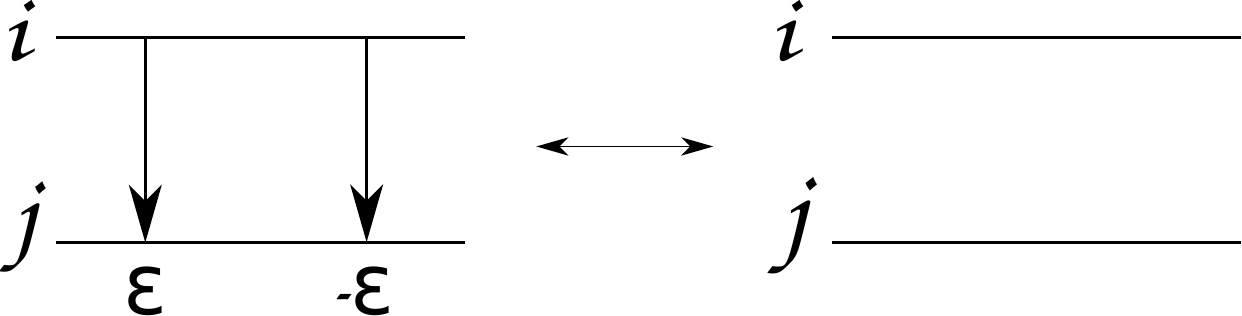}}
\hspace{40pt}
\subfloat[$\Omega 3$--move]{\includegraphics[scale=0.5]{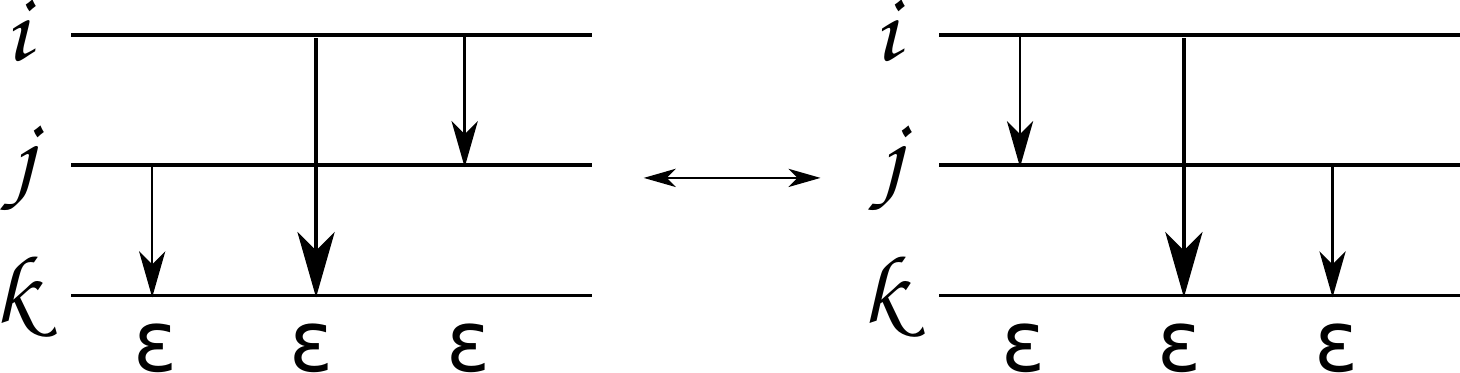}}
\vspace{15pt}\\
\subfloat[S$\Omega 2$--move]{\includegraphics[scale=0.5]{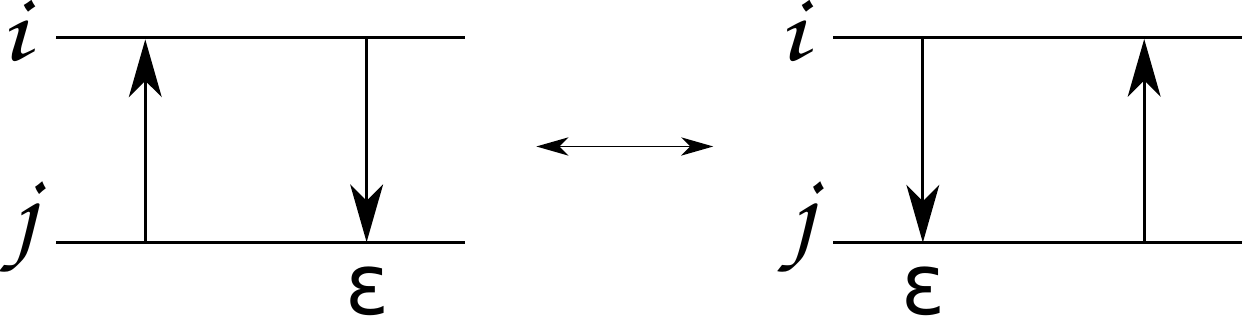}}
\hspace{40pt}
\subfloat[S$\Omega 3$--move]{\includegraphics[scale=0.5]{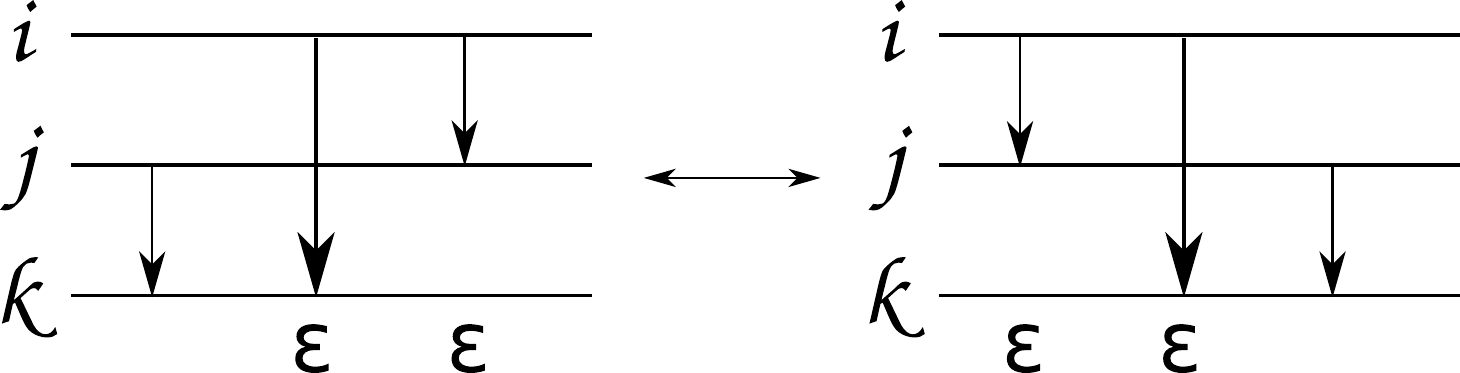}}
\caption{$\Omega$--moves}
\label{fig:omegamoves}
\end{figure}

Singular horizontal Gauss diagrams are identified by the equivalence relation generated by the $\Omega$--moves and oriented diffeomorphisms of the underlying intervals $I_i$, $i=1,\dots ,n$. We call {\it  horizontal Gauss diagrams} an equivalence class of singular horizontal Gauss diagrams and we denote the set of equivalence classes of horizontal Gauss diagrams on $n$ strands by $G_n$.  

\begin{definition}\label{def:GaussBraidDiagram}
Let $\beta$ be a virtual singular braid diagram on $n$ strands. The {\it  singular horizontal Gauss diagram of $\beta$}, $G(\beta)$, is a singular horizontal Gauss diagram on $n$ strands given by: 
\begin{itemize}
	\item each underlying interval of $G(\beta)$ is associated to the corresponding preimage of a strand of $\beta$, 
	\item there is a {\it  signed arrow} for each classical crossing, whose endpoints correspond to the preimages of the crossing with the following rule: 
	\begin{itemize}
		\item Arrows are pointing from the over-passing string to the under-passing string,
		\item The sign of the arrow is given by the sign of the crossing,
	\end{itemize}
	\item there is a {\it  simple arrow} for each singular crossing, whose endpoints correspond to the preimages of the singular crossing with the following rule: according to the standard orientation of the plane, the tail corresponds to the strand that plays the role of the $x$-axis, and the head of the arrow corresponds to the strand that plays the role of the $y$-axis:
\begin{figure}[ht]
\hspace{25pt}
\subfloat{\includegraphics[scale=0.35]{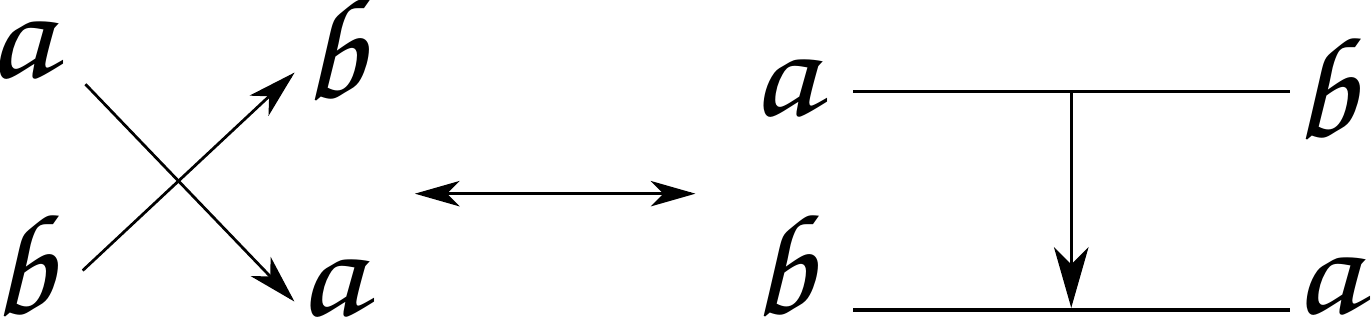}}
\hspace{40pt}
\subfloat{\includegraphics[scale=0.35]{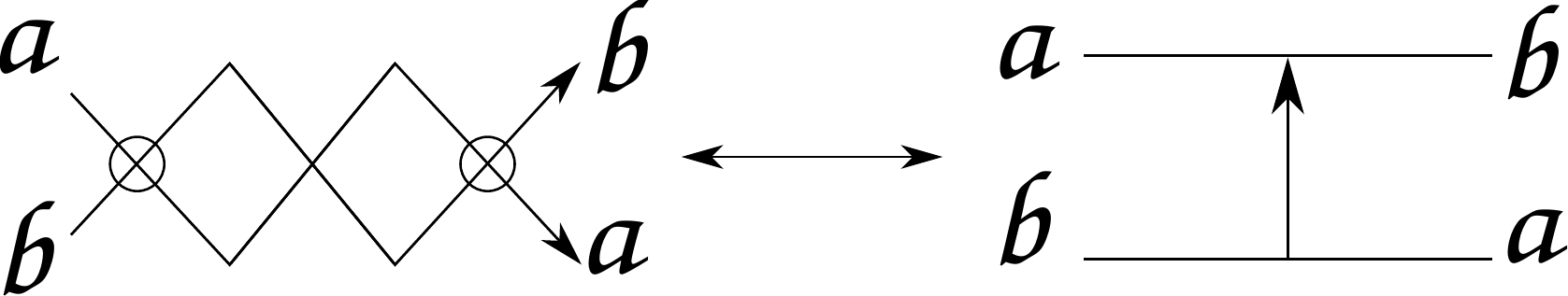}}
\end{figure}
	\item the permutation of $G(\beta)$ corresponds to the permutation associated to $\beta$.
\end{itemize}
\end{definition}

In Figure \ref{fig:HorGauss} is an example of translation of a singular virtual braid diagram into a horizontal Gauss diagram.

\begin{figure}[ht]
\includegraphics[scale=0.7]{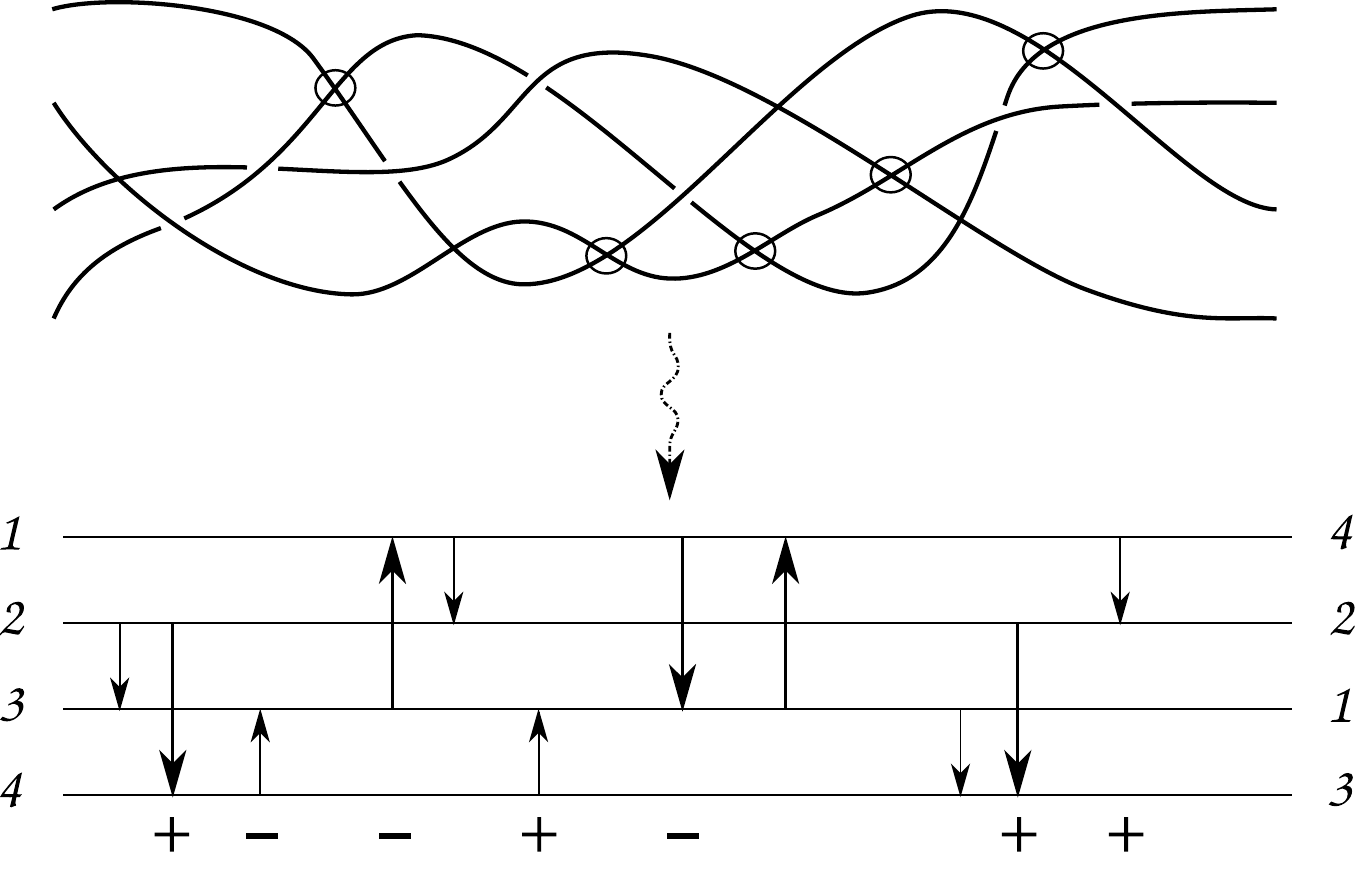}
\caption{The construction of $G(\beta)$}\label{fig:HorGauss}
\end{figure}

Notice that that classical, virtual and singular Reidemeister moves are translated into $\Omega$--moves, and isotopies are translated into diffeomorphisms of the underlying intervals. Thus, there is a well defined function from virtual singular braids to horizontal Gauss diagrams, leading to the next proposition.

\begin{proposition}\label{prop:GaussD}
There is a bijective correspondence between the set of singular virtual braids on $n$ strands, $SVB_n$, and the set of horizontal Gauss diagrams, $G_n$. 
\end{proposition}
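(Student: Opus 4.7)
The overall strategy, modeled on the non-singular analogue established in \cite{BACDLC}, is to construct explicit maps in both directions and verify they are mutual inverses. The forward map $G : SVB_n \to G_n$ is given at the diagram level by Definition \ref{def:GaussBraidDiagram}. My first step would be to show that $G$ descends to a well-defined map on equivalence classes, by running through each defining relation of $SVB_n$ and verifying that the two sides produce horizontal Gauss diagrams related by $\Omega$-moves and reparametrizations of underlying intervals. The commutation-type relations (R0), (V1), (V2), (S1), (S2), (S3) and (SV1) become diffeomorphisms of the underlying intervals that slide arrow endpoints past each other when the supporting strand sets are disjoint; the remaining Reidemeister-type relations (R2), (R3), (V3), (V4), (V5), (S4) and (SV2) become the $\Omega 2$, $\Omega 3$, $S\Omega 2$ and $S\Omega 3$ moves of Figure \ref{fig:omegamoves} directly. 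This step extends the corresponding calculation for $VB_n$ in \cite{BACDLC} by the two new families of relations involving singular arrows.

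For the inverse, I would define $\Psi : G_n \to SVB_n$ by the standard \emph{combing} procedure: given a singular horizontal Gauss diagram, perturb so that no two arrow endpoints share the same horizontal coordinate, and scan left to right. At each arrow event, use virtual generators $\rho_j$ to bring the two strands carrying the endpoints into adjacent rows (dictated by their current vertical position in the braid), insert $\sigma_i^{\pm 1}$ if the arrow is signed (with sign matching the orientation rule in Definition \ref{def:GaussBraidDiagram}) or $\tau_i$ if it is unsigned, and continue; at the right boundary, append virtual generators to realize the permutation $\pi$. The resulting word in the generators of $SVB_n$ depends on local routing choices, and well-definedness on the equivalence class in $SVB_n$ follows from (V1)--(V5), (SV1) and (SV2). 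I would then check that $\Psi$ descends under $\Omega$-moves by matching each of the four moves with the corresponding Reidemeister-type relation in $SVB_n$.

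Finally, verifying the two compositions is essentially mechanical: $G \circ \Psi = \mathrm{id}_{G_n}$ is immediate because $\Psi$ reads off exactly the arrow data that $G$ reconstructs (up to reparametrization of underlying intervals), and $\Psi \circ G = \mathrm{id}_{SVB_n}$ amounts to the observation that the word produced by $\Psi$ from $G(\beta)$ differs from $\beta$ only by virtual and singular-virtual moves needed to normalize the routing between crossings. The main obstacle I expect is the well-definedness of $\Psi$ under routing choices and under $\Omega$-moves, as this is where the full list of virtual and singular-virtual relations must be deployed together; the relations (SV1) and (SV2), which govern the interaction between virtual generators and unsigned arrows, are essential here and have no counterpart in the non-singular case. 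Once $\Psi$ is well-defined on $G_n$, the remainder of the argument goes through essentially as in \cite{BACDLC}.
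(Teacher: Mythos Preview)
Your approach is essentially the same as the paper's: construct the inverse $B:G_n\to SVB_n$ by reading off the arrows from left to right and realizing each as a crossing, routing the relevant strands into adjacency using virtual generators (the paper phrases this geometrically as drawing ``joining arcs'' whose new intersections are declared virtual), then verifying $G\circ B$ and $B\circ G$ are identities and referring to \cite{BACDLC} for the details common to the non-singular case.

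One small correction to your case analysis: relation (S3), $\tau_i\sigma_i=\sigma_i\tau_i$, is \emph{not} a disjoint-support commutation and does not become a mere reparametrization of underlying intervals. Both crossings involve the same two strands, and after the first crossing the roles of the strands are swapped, so on the Gauss diagram side (S3) is exactly the $S\Omega 2$ move. Correspondingly, your list of relations mapping to $\Omega$-moves should read (R2)$\to\Omega 2$, (R3)$\to\Omega 3$, (S3)$\to S\Omega 2$, (S4)$\to S\Omega 3$; the purely virtual relations (V3)--(V5) and the mixed relations (V5), (SV2) produce identical Gauss diagrams on both sides (virtual crossings carry no arrows), so they require no $\Omega$-move at all for well-definedness of $G$. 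Their role is exactly where you place it later, namely in the well-definedness of the inverse under routing choices.
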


\begin{proof}
We have a well defined function $G: SVB_n \rightarrow G_n$. It suffices to construct a function $B: G_n \rightarrow SVB_n$ and show that $G\circ B = Id_{G_n}$ and $B\circ G = Id_{SVB_n}$.

Given a singular horizontal Gauss diagram $g$, we can construct a singular virtual braid diagram, $B(g)$, as follows. 

Consider a unitary square on the plane, with $n$ points on the left, labelled by $1, \dots, n$, and $n$ points on the right, labelled by the permutation of the singular horizontal Gauss diagram. Then, draw a classical crossing, with the given sign for each signed arrow, and a singular crossing for each arrow, in the order that they appear on the diagram with respect to the $x$-axis and in such a way that their projection to the $x$-axis do not intersect. Notice that each underlying interval represents a strand of the virtual singular braid, indicating the crossing where the strand is involved, according to the (signed) arrows, and its endpoints with respect to the labelled points. Then the singular horizontal Gauss diagram can be seen as a set of instructions for drawing {\it  joining arcs} connecting crossings and endpoints. Notice that in the construction of the joining arcs, new crossings will appear. These crossings will be virtual. With this we have constructed a virtual singular braid diagram. 

We can verify that $B(g)$ is well defined up to  virtual Reidemeister moves, and if $g$ is identified up to oriented diffeomorphism, then $B(g)$ is well defined up to isotopy and virtual Reidemeister moves. 

This construction induces a well defined function from the set of equivalence classes of singular horizontal Gauss diagrams ($G_n$) to the set of equivalence classes of virtual singular braid diagrams ($SVB_n$) - for a detailed discussion about this see \cite{BACDLC}.  It is immediate to see that $B: G_n \rightarrow SVB_n$ and $G\circ B = Id_{G_n}$ and $B\circ G = Id_{SVB_n}$. For an example of a singular virtual braid built from a horizontal Gauss diagram, see Figure \ref{fig:BraidGauss}.
\end{proof}

\begin{figure}[ht]
\includegraphics[scale=0.7]{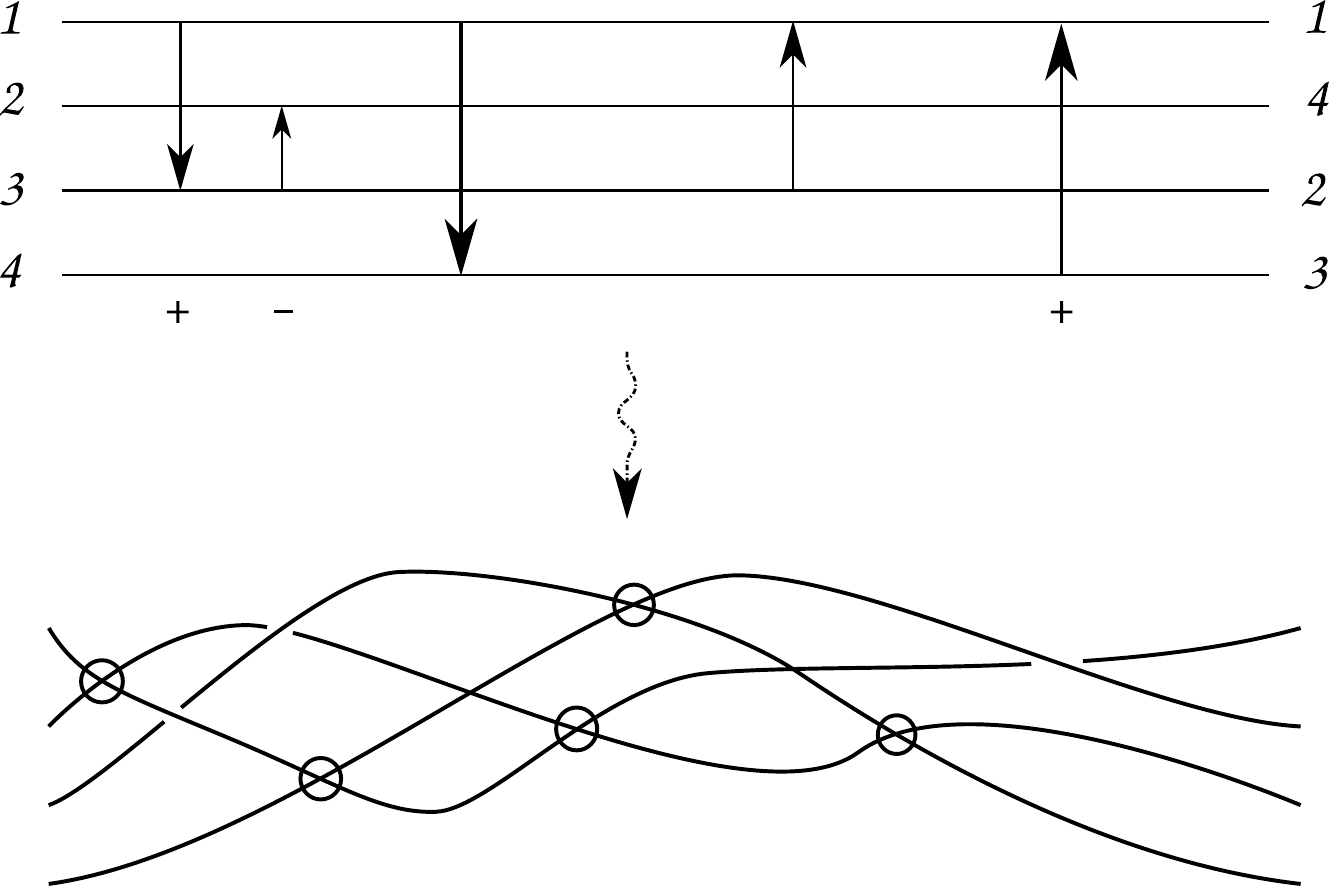} 
\caption{The construction of $B(g)$}\label{fig:BraidGauss}
\end{figure}

\subsection{Singular virtual pure braids on $n$ strands $SVP_n$}

\

Let $u_i\in \{\sigma_i, \tau_i, \rho_i \; | \; 1\leq i \leq n-1 \; \}$  be a generator of $SVB_n$, and let $\theta : SVB_n \rightarrow S_n$ be defined by $\theta(u_i) = (i,i+1)$. Notice that $\theta$ induces an homomorphism. The kernel of this homomorphism is called the monoid of {\it  singular virtual pure braids on $n$ strands}, denoted by $SVP_n$.  The elements of $SVP_n$ are singular virtual braid diagrams, identified up to classical, virtual and singular Reidemeister moves and isotopy, whose associated permutation is the identity (i.e. the strands do not mix the endpoints).  

As an application of the bijection between horizontal Gauss diagrams and singular virtual braids, we recover the presentation of $SVP_n$ given by Caprau and Zepeda \cite{CZ}.

\noindent
\begin{proposition}\label{prop:PresPSVB}
The virtual singular pure braid monoid on $n$ strands, $SVP_n$ admits the following presentation. 
\begin{itemize}
	\item Generators: For $\epsilon \in \{ \pm 1\}$ and $1\leq i \neq j \leq n$, $X^{\epsilon}_{i,j}$ and $Y_{i,j}$. 
	\item Relations: For $i,j,k,l\in \{1,\dots,n\}$ all different and $\epsilon \in \{\pm 1\}$,
	\begin{itemize}
		\item[(SP1)] $X_{i,j}^{\epsilon}X_{i,j}^{-\epsilon} = 1$. 
		\item[(SP2)] $X_{i,j}^{\epsilon}X_{i,k}^{\epsilon}X_{j,k}^{\epsilon} =  X_{j,k}^{\epsilon}X_{i,k}^{\epsilon}X_{i,j}^{\epsilon}$.
		\item[(SP3)] $X_{i,j} X_{k,l} = X_{k,l} X_{i,j}$, $Y_{i,j}Y_{k,l} = Y_{k,l} Y_{i,j}$ and $X_{i,j}Y_{k,l} = Y_{k,l} X_{i,j}$.
		\item[(SP4)] $Y_{i,j} X_{j,i}^{\epsilon} = X_{i,j}^{\epsilon} Y_{j,i}$. 
		\item[(SP5)] $Y_{j,k}X_{i,k}^{\epsilon}X_{i,j}^{\epsilon} =  X_{i,j}^{\epsilon}X_{i,k}^{\epsilon}Y_{j,k}$.
	\end{itemize}
\end{itemize}
\end{proposition}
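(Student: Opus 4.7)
The plan is to exploit the bijection $SVB_n\cong G_n$ from Proposition \ref{prop:GaussD}, restricted to the pure submonoid. A singular virtual braid is pure if and only if its Gauss diagram has trivial permutation, so proving the proposition reduces to giving a presentation of the set $G_n^{\mathrm{id}}$ of equivalence classes of horizontal Gauss diagrams on $n$ strands with identity permutation, where multiplication is horizontal juxtaposition.

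First I would identify each generator with a specific single-arrow Gauss diagram: $X_{i,j}^{\epsilon}$ corresponds to a single signed arrow of sign $\epsilon$ with tail on strand $i$ and head on strand $j$, and $Y_{i,j}$ to a single unsigned arrow with tail on $i$ and head on $j$; both carry trivial permutation. Since any Gauss diagram with identity permutation can be reparametrized (via orientation-preserving diffeomorphisms of the underlying intervals) so that its arrows lie in pairwise disjoint vertical strips, every element of $SVP_n$ is a product of such single-arrow diagrams, so the $X_{i,j}^{\epsilon}$ and $Y_{i,j}$ generate. I would then verify each relation (SP1)--(SP5) by interpreting it as a basic modification of Gauss diagrams: (SP1) is the $\Omega 2$-move; (SP3) is the commutation of arrows whose endpoints lie on four disjoint strands, which follows directly from the diffeomorphism equivalence; (SP2) is the $\Omega 3$-move; (SP4) is the $S\Omega 2$-move; and (SP5) is the $S\Omega 3$-move. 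This shows the morphism from the presented monoid to $SVP_n$ is well defined and surjective.

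For completeness, I would construct an inverse map by choosing, for each equivalence class of Gauss diagrams in $G_n^{\mathrm{id}}$, a representative in which the arrows lie in pairwise disjoint vertical strips and reading off the resulting word in the alphabet $\{X_{i,j}^{\epsilon}, Y_{i,j}\}$. The task is to show that two words so obtained from equivalent diagrams represent the same element in the presented monoid. Since the equivalence on Gauss diagrams is generated by the four $\Omega$-moves together with orientation-preserving diffeomorphisms of the underlying intervals, and each such local modification has been matched to one of (SP1), (SP2), (SP4), (SP5) (or to (SP3) for a diffeomorphism that swaps disjoint strips), the induced word in the presented monoid is invariant.

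The main obstacle is bookkeeping. An $\Omega$-move is a local modification, but the two arrows it affects need not be adjacent in the word read off from the strip decomposition: arrows sitting between them must first be shifted aside via (SP3)-type commutations, which requires verifying that their endpoint-strands are indeed disjoint from those of the arrows involved in the move. A second subtlety is the head/tail indexing: relations (SP4) and (SP5) record the fact that the $S\Omega$-moves swap the roles of $(i,j)$ and $(j,i)$ on the unsigned arrows, and one must confirm that the conventions of Definition \ref{def:GaussBraidDiagram} make these assignments consistent. Once these points are systematized, the argument runs parallel to the non-singular virtual case treated in \cite{BACDLC}, and Proposition \ref{prop:PresPSVB} follows.
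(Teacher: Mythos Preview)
Your proposal is correct and follows essentially the same route as the paper: both arguments restrict the bijection of Proposition~\ref{prop:GaussD} to Gauss diagrams with trivial permutation, read each such diagram as a word in single-arrow generators $A_{i,j}^{\epsilon}$, $S_{i,j}$ (your $X_{i,j}^{\epsilon}$, $Y_{i,j}$), and then match the $\Omega$--moves and interval reparametrizations one-for-one with relations (SP1)--(SP5). The only item present in the paper that you omit is the explicit realization of $X_{i,j}$ and $Y_{i,j}$ inside $SVP_n$ as words in the $\sigma_i,\rho_i,\tau_i$; conversely, your discussion of the ``bookkeeping'' issue (commuting intervening disjoint arrows via (SP3) before applying an $\Omega$--move) is a detail the paper leaves implicit.
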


\begin{proof}
 As proved in Proposition \ref{prop:GaussD}, there is a bijection between virtual braid diagrams and horizontal Gauss diagrams and therefore there is a bijection between the set of pure virtual braid diagrams and the set of horizontal Gauss diagrams whose associated permutation is the identity or equivalently horizontal Gauss diagrams ``without permutation".

Given a {\it  singular horizontal Gauss diagram} without permutation, we can express it in terms of its signed arrows and simple arrows as follows: 
\begin{itemize}
	\item up to reparametrization of the underlying intervals, we can suppose that for each time $t\in [0,1]$ there is exactly one arrow or signed arrow,
	\item denote a signed arrow by $A_{i,j}^{\epsilon}$ if it begins on the $i$-th interval and ends on the $j$-th interval, with $\epsilon \in \{\pm 1\}$ according to the sign of the signed arrow,
	\item denote a simple arrow by $S_{i,j}$ if it begins on the $i$-th interval and ends on the $j$-th interva,
	\item write a word concatenating $A_{i,j}^{\epsilon}$'s and $S_{i,j}$'s depending on the signed or simple arrows, as they appear on the singular horizontal Gauss diagram, according to the time. 
\end{itemize}
Conversely, given a word on the alphabet $\Sigma = \{ \ A_{i,j}^{\pm 1}, \ S_{i,j} \ | \ \text{for $1\leq i\neq j \leq n$ and $\epsilon \in \{\pm 1\}$} \}$, one can build a horizontal singular Gauss diagram without permutation.


From this discussion, there is a bijection between the set of singular horizontal Gauss diagrams without permutation (call it the set of {\it pure horizontal Gauss diagrams}) and the free monoid $\mathcal{F}(\Sigma)$ over $\Sigma$.

It remains to translate the $\Omega$--moves and diffeomorphisms on pure horizontal Gauss diagrams into relations on the monoid, to obtain the proposition. For $\epsilon \in\{\pm 1\}$ and $i,j,k,l \in \{1,\dots, n\}$ different: 

\begin{enumerate}
	\item[(P1)] move $\Omega2$ translates as $A_{i,j}^{\epsilon}A_{i,j}^{-\epsilon} = e$,
	\item[(P2)] move $\Omega3$ translates as $A_{i,j}^{\epsilon}A_{i,k}^{\epsilon}A_{j,k}^{\epsilon} =  A_{j,k}^{\epsilon}A_{i,k}^{\epsilon}A_{i,j}^{\epsilon}$,
	\item[(P3)] move $S\Omega2$ translates as $S_{i,j} A_{j,i}^{\epsilon} = A_{i,j}^{\epsilon} S_{j,i}$,
	\item[(P4)] move $S\Omega3$ translates as $S_{j,k}A_{i,k}^{\epsilon}A_{i,j}^{\epsilon} =  A_{i,j}^{\epsilon}A_{i,k}^{\epsilon}S_{j,k}$,
	\item[(P5)] reparametrization of the underlying intervals generates the  following relations: 
	\begin{itemize}
	    \item $A_{i,j} A_{k,l} = A_{k,l} A_{i,j}$,
	    \item $S_{i,j}S_{k,l} = S_{k,l} S_{i,j}$,
	    \item$A_{i,j}S_{k,l} = S_{k,l} A_{i,j}$.
	\end{itemize}
\end{enumerate}

Furthermore, as pure singular virtual braid diagrams (pure horizontal Gauss diagrams) do not mix the strands, multiplication of pure singular virtual braids (concatenation of pure horizontal Gauss diagrams) is equivalent to the product of the monoid $\mathcal{F}(\Sigma)$. As a consequence, the homomorphism given by $\varphi: \mathcal{F}(\Sigma) \rightarrow SVP_n$ defined for $1\leq i < j \leq n$ as follows: 
\[\begin{split}
	\varphi(A_{i,j}^{\epsilon})  	&= X_{i,j} := \rho_{j-1} \dots \rho_{i+1} \rho_i \sigma_i \rho_i  \rho_{i+1} \dots \rho_{j-1} \\
	\varphi(A_{j,i}^{\epsilon}) 	&= X_{j,i} :=\rho_{j-1} \dots \rho_{i+1}  \sigma_i   \rho_{i+1} \dots \rho_{j-1} \\
	\varphi(S_{i,j})			&= Y_{i,j} := \rho_{j-1} \dots \rho_{i+1} \rho_i \tau_i \rho_i  \rho_{i+1} \dots \rho_{j-1} \\
	\varphi(S_{j,i})			&= Y_{j,i} := \rho_{j-1} \dots \rho_{i+1}  \tau_i  \rho_{i+1} \dots \rho_{j-1} 
\end{split}\] 
is well defined and induces a monoid isomorphism between $\mathcal{F}(\Sigma)/(P1-P5)$ and $SVP_n$.
\end{proof}

\noindent

\noindent
{\bf Remark. } The homomorphism $\theta: SVB_n \rightarrow S_n$ has a monoid section, $\tau: S_n \rightarrow SVB_n$, defined on its generators by $\tau((i,i+1))= \rho_i$, for $1\leq i \leq n-1$. This gives a decomposition $$SVB_n=SVP_n \rtimes S_n$$ where the action of $S_n$ on $SVP_n$ is given by $\pi \cdot X_{i,j} = X_{\pi(i), \pi(j)}$ and $\pi \cdot Y_{i,j} = Y_{\pi(i), \pi(j)}$. \\

\section{Topological properties}\label{Section5}

In this section, we exhibit a topological realization of singular virtual braids as singular abstract braids, which generalizes abstract braids introduced in \cite{BACDLC} and \cite{Kamada-tachi}.

\begin{definition}
A {\it  singular abstract braid diagram on $n$ strands} is $\bar{\beta}= (S,\beta,\epsilon)$ is a triple such that: 
\begin{enumerate}
	\item $S$ is a connected, compact and oriented surface with $\partial S= C_0 \sqcup C_1$,
	\item each boundary component of $S$ has $n$ marked points, say $\{a_1, \dots, a_n\}\subset C_0$
	and\\
	$\{b_1,\dots, b_n\}\subset C_1$, where $a_j = e^{2\pi j/n}$ and $b_j = e^{-2\pi j /n}$ with the orientation of $C_i$,
	\item $\beta$ is an $n$-tuple of arcs $\beta= (\beta_1, \dots, \beta_n)$ such that: 
	\begin{itemize}
		\item for $k\in\{1,\dots,n\}$, $\beta_k$ is an arc from $[0,1]$ to $S$, 
		\item for $k\in\{1,\dots,n\}$, $\beta_k(0)=a_k$ and there exists $\pi\in S_n$ such that $\beta_k(1) = b_{\pi(k)}$, 
		\item the set of $n$-tuple of curves $\beta$ is in general position, i.e. there are only transverse double points on the image of $\beta$ in $S$ (called {\it  crossings}), 
		\item the oriented graph formed by $\beta$ has no oriented cycles, 
	\end{itemize}
	\item each crossing is either a positive, negative or singular crossing with its nature indicated by a function  $$\epsilon: \text{\{Crossings\}} \rightarrow \{ +1 , -1, s\}.$$
\end{enumerate}
\end{definition}

\begin{definition}
We say that two singular abstract braid diagrams, $(S,\beta, \epsilon)$ and $(S,\beta', \epsilon')$, are {\it  Reidemeister equivalent}, if they are related by a finite number of the following operations: 
\begin{itemize}
    \item {\it Ambient isotopy.} There is a continuous map $$H:(S,\partial S) \times[0,1]\rightarrow (S, \partial S)$$ with $H_t=H(\cdot,t) \in Diff(S,\partial S)$ and $(S, H_t(\beta), \epsilon_t)$ is a singular abstract braid diagram such that $(S,H_0(\beta),\epsilon_0) = (S,\beta,\epsilon)$ and $(S,H_1(\beta),\epsilon_1) = (S,\beta',\epsilon')$, where $\epsilon_t$ is the crossing map induced by $H_t$. 
    \item {\it  Reidemeister moves.} We say that $(S,\beta, \epsilon)$ and $(S,\beta', \epsilon')$ are related by a Reidemeister move, if there exists an open neighbourhood in $S$ such that we can perform a Reidemeister move of type $R2$ or $R3$ on $\beta$ to obtain $(S,\beta', \epsilon')$
    \item {\it  Singular Reidemeister moves.} We say that $(S,\beta, \epsilon)$ and $(S,\beta', \epsilon')$ are related by a singular Reidemeister move, if there exists an open neighbourhood in $S$ such that we can perform a singular Reidemeister move of type $S3$ or $S4$ on $\beta$ to obtain $(S,\beta', \epsilon')$
\end{itemize} 
We call the set of Reidemeister equivalence classes the set of { \it singular abstract braids}, and we denote them by $SAB_n$. 
\end{definition}

\begin{definition}
We say that two singular abstract braid diagrams, $(S,\beta, \epsilon)$ and $(S',\beta', \epsilon')$, are {\it  Stable equivalent}, if they are related by a finite number of the following operations: 
\begin{enumerate}
	\item 
	{\it  Diffeomorphism.} We say that $(S',\beta',\epsilon')$ is obtained from $(S, \beta, \epsilon)$ by a diffeomorphism  if there exists $f \in Diff^{+}(S,S')$ such that $(S',\beta', \epsilon')=(f(S),f(\beta), \epsilon\circ f^{-1})$.
	

	\item {\it  Stabilization.} We say that $(S',\beta', \epsilon')$ is obtained from  $(S, \beta, \epsilon)$ by a stabilization if there exists an attaching region, $h:S^0\times D^2 \mapsto S$, for a $1$-handle that is disjoint from the image of $\beta$ and $(S',\beta', \epsilon')=(S'',\beta, \epsilon)$, where $S''$ is obtained by the $0$-surgery on $S$ along $h$, i.e. is the surface $$S'' := S \setminus \overbrace{h(S^0\times D^2)}^{\circ} \cup_{S^0\times S^1} D^1\times S^1.$$

	
	\item {\it  Destabilization.} A destabilization is the inverse operation of a stabilization, and it involves cutting $S$ along an essential curve $\gamma$ disjoint from the image of $\beta$ and attaching two copies of $D^2$ along the two new boundary components.  If the resulting surface is disconnected, then we keep only the component containing $\beta$. 
\end{enumerate}
\end{definition} 

We now prove that there exists a correspondence between singular virtual braids and singular abstract braids up to stable equivalence by establishing a bijection between these last objects and horizontal Gauss diagrams.

\begin{proposition}
	There is a bijective correspondence between the set of stable classes of singular abstract braids and the set of horizontal Gauss diagrams. 
\end{proposition}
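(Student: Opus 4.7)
The plan is to mimic the argument of \cite{BACDLC}, via the same sort of forward-and-backward construction used in Proposition \ref{prop:GaussD}, but now with the Gauss diagram playing the role of a combinatorial invariant of singular abstract braid diagrams. Define a map $\tilde{G}:SAB_n\to G_n$ by reading off, for each singular abstract braid diagram $(S,\beta,\epsilon)$, exactly the data used in Definition \ref{def:GaussBraidDiagram}: the $n$ arcs $\beta_1,\dots,\beta_n$ give the $n$ underlying intervals, each classical crossing gives a signed arrow (signed by $\epsilon$) pointing from the over-strand to the under-strand, each singular crossing gives an unsigned arrow with head/tail chosen by the $x/y$-axis convention, and the permutation $\pi$ such that $\beta_k(1)=b_{\pi(k)}$ gives the labelling of the right endpoints. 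This is purely local data attached to each crossing, so it does not see the surface.

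Next, I would check that $\tilde{G}$ factors through stable equivalence. Ambient isotopy clearly preserves the combinatorial data up to diffeomorphism of the underlying intervals. The Reidemeister moves $R2,R3$ translate into the $\Omega 2$ and $\Omega 3$ moves, and the singular Reidemeister moves $S3,S4$ into $S\Omega 2$ and $S\Omega 3$, exactly as in the proof of Proposition \ref{prop:GaussD}. A surface diffeomorphism $f$ only relabels where the strands live in $S$, leaving all crossings and their labels intact. Finally, stabilization attaches a handle disjoint from $\beta$ and destabilization cuts along an essential curve disjoint from $\beta$; neither touches the image of $\beta$, so the crossings, their signs, and the permutation are unchanged. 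Hence $\tilde{G}$ descends to stable classes.

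For the inverse, I would construct a map $\tilde{B}:G_n\to SAB_n/\text{stab.}$ along the lines of the map $B$ in Proposition \ref{prop:GaussD}. Starting from a horizontal Gauss diagram $g$, take a thin rectangular neighborhood of each underlying interval: this produces $n$ disjoint strips. For each (signed or unsigned) arrow of $g$, attach a small band (a $1$-handle) between the two strips it connects at the prescribed moment along the $x$-axis, and draw inside that handle a local classical, respectively singular, crossing with the sign and orientation prescribed by the arrow. The result is a connected compact oriented surface $S(g)$ with $2n$ boundary marked points and an $n$-tuple $\beta(g)$ of arcs realizing exactly the crossings encoded by $g$; this is a singular abstract braid diagram. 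Different choices of where to attach the bands or the thickness of the strips differ by ambient isotopies, diffeomorphisms, and addition/removal of handles disjoint from $\beta(g)$, so $\tilde{B}(g)$ is well-defined in $SAB_n/\text{stab.}$, and the four families of $\Omega$-moves translate respectively into $R2,R3,S3,S4$ applied inside the corresponding bands, so $\tilde{B}$ factors through $G_n$.

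It remains to verify $\tilde{G}\circ\tilde{B}=\mathrm{Id}_{G_n}$ and $\tilde{B}\circ\tilde{G}=\mathrm{Id}$ on stable classes. The first is immediate by construction, since $\tilde{B}(g)$ has exactly the crossings encoded by $g$. The second is the genuinely substantive direction and will be the main obstacle: starting from an arbitrary $(S,\beta,\epsilon)$, I must show that after a finite sequence of stabilizations and destabilizations the surface $S$ is diffeomorphic to the standard "strips plus bands" model $S(\tilde{G}(S,\beta,\epsilon))$ carrying $\beta$. The strategy, following \cite{BACDLC}, is to thicken a regular neighborhood $N(\beta)\subset S$ into a surface of the correct homeomorphism type, destabilize $S$ down to $N(\beta)$ by cutting along a maximal system of curves in $S\setminus N(\beta)$ (any component of $S\setminus N(\beta)$ not meeting $\partial S$ can be killed by destabilization, and the remaining complementary pieces can be normalized by handle slides realized as compositions of stabilization and destabilization). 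This reduction step, which essentially asserts that $N(\beta)$ is a complete invariant of the stable class, is where most of the technical work lies; I expect to import it almost verbatim from the non-singular argument, since the presence of singular crossings does not affect the underlying surface-reduction procedure, only the local labelling at each double point.
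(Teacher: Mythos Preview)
Your forward map $\tilde G$ is exactly the paper's map $G$, and your check that it factors through stable equivalence matches the paper's. The difference is in the inverse. The paper does \emph{not} build the surface directly from the Gauss diagram; instead it composes with the map $B$ from Proposition~\ref{prop:GaussD}: given $g$, first form the singular virtual braid diagram $B(g)$ in the plane, attach a circle at each end so that the $n$ left (resp.\ right) endpoints lie on a single boundary circle, take a regular neighbourhood in $\mathbb R^2$, push into $\mathbb R^3$ and resolve each virtual crossing into two disjoint bands, and finally cap every boundary component except the two distinguished circles. This route buys two things: well-definedness of the inverse reduces to the well-definedness of $B$ already established in Proposition~\ref{prop:GaussD} (different diagram choices for $B(g)$ differ only by virtual moves, which become diffeomorphisms after the perturbation step), and the verification of $A\circ G=\mathrm{Id}$ can be imported wholesale from \cite{BACDLC} rather than redone.

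Your direct strips-and-bands construction is morally the same surface, but as written it has a genuine gap: the object you produce is not a singular abstract braid diagram in the sense of the definition. The definition requires $S$ to be connected with $\partial S=C_0\sqcup C_1$ exactly two circles, each carrying the $n$ marked points $a_k=e^{2\pi k/n}$, $b_k=e^{-2\pi k/n}$. Your $n$ disjoint strips joined by bands at the arrows need not be connected (e.g.\ when $g$ has no arrows), and in any case the boundary will have many components, not two. You need the paper's two extra steps---gluing the left (resp.\ right) ends of the strips onto a single circle, and capping all non-distinguished boundary components---before you have an abstract braid. Once you add those, your $\tilde B$ coincides with the paper's $A$ up to diffeomorphism, and your more explicit destabilize-down-to-$N(\beta)$ argument for $\tilde B\circ\tilde G=\mathrm{Id}$ is a welcome elaboration of what the paper asserts in one line.
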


\begin{proof}
	Notice that the Gauss diagram of a singular virtual braid diagram, $(\beta,\epsilon)$, is completely defined by the graph induced by $\beta$ and the function $\epsilon$. This allows us to define an analogue to Definition \ref{def:GaussBraidDiagram} for abstract braid diagrams, i.e. given an abstract braid diagram, $(S,\beta,\epsilon)$, we can associate to it a singular Gauss diagram $G(S,\beta,\epsilon)$.  
	
	
	Furthermore, this association is well defined up to stable equivalence, since diffeomorphisms, stabilizations and destabilizations do not change the pair  $(\beta,\epsilon)$, and if we perform a local Reidemeister or singular Reidemeister move, it is equivalent to perform an $\Omega$--move on the singular horizontal Gauss diagram.  Thus, we have a well defined function $$G: SAB_n / \text{(stability)}\rightarrow G_n.$$ 
	
	On the other hand, given a horizontal singular Gauss diagram, $g$, consider the singular virtual braid diagram $B(g)$.
\begin{figure}[h]
    \centering
    \includegraphics[scale=0.7]{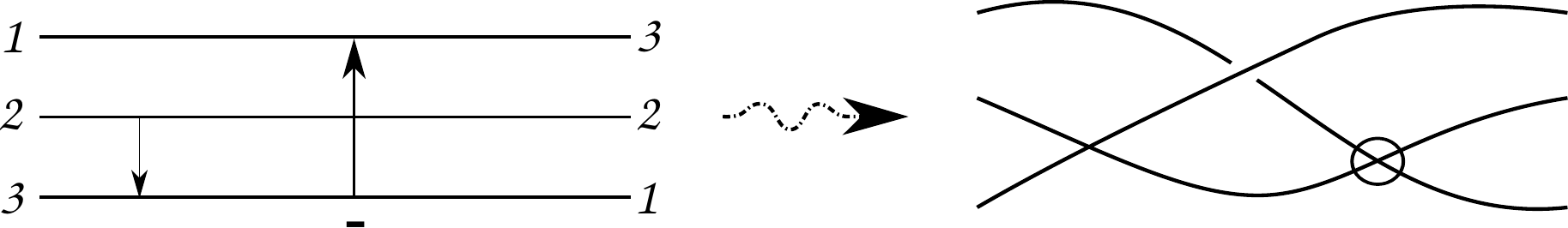} 
    \caption{Singular virtual braid diagram from Gauss diagram. }
    \label{fig:proof1}
\end{figure}

	 From $B(g)$ we can construct an abstract singular braid diagram as constructed in \cite{BACDLC} for braids and in \cite{Kamada-tachi} for knots, that is: 
\begin{enumerate}
	\item To each side of the braid diagram add a circle in such a way that the respective endpoints lie on it, call these circles {\it  distinguished components} (see Figure \ref{fig:proof2}).
\begin{figure}[h]
    \centering
    \includegraphics[scale=0.3]{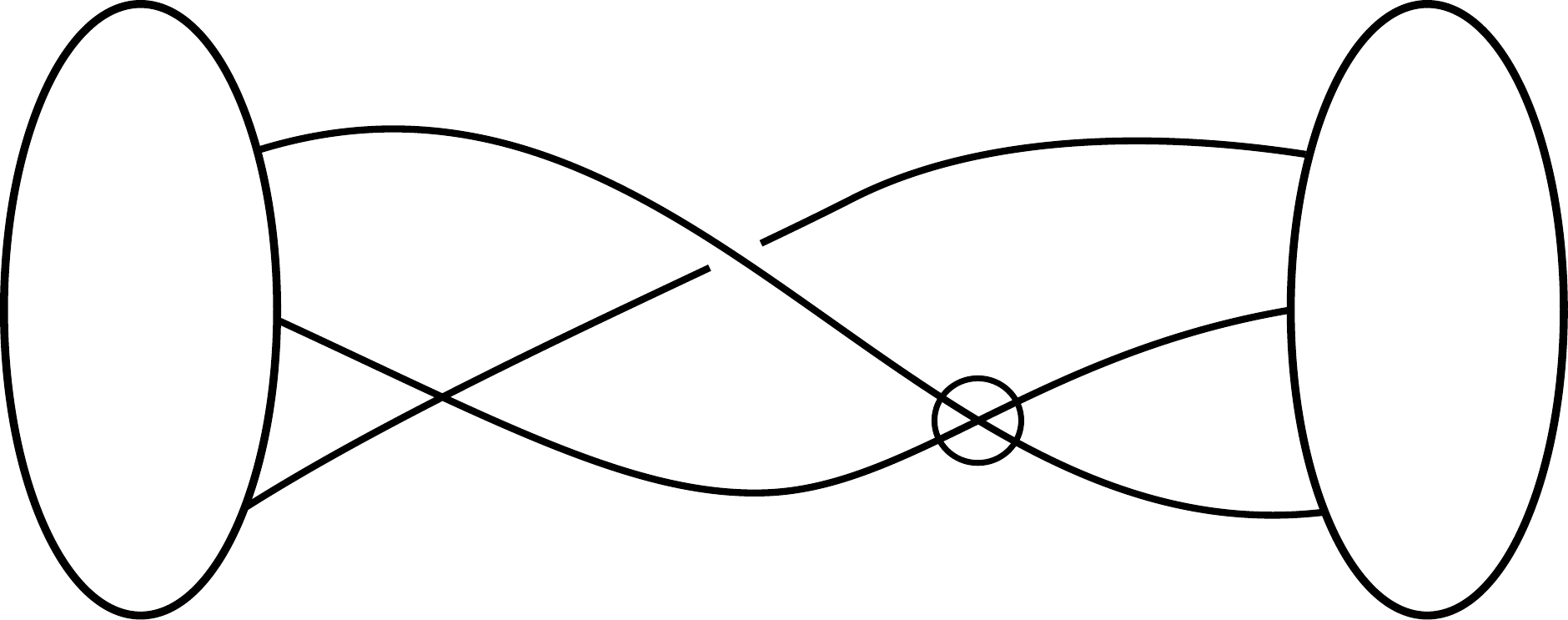} 
    \caption{Adding circles to the virtual braid diagram}
    \label{fig:proof2}
\end{figure}
	\item Take a regular neighborhood, in $\R^2$, of the obtained diagram. We get a surface $\Sigma$ with several boundary components, among them the distinguished components (see Figure \ref{fig:proof3}).
\begin{figure}[h]
    \centering
    \includegraphics[scale=0.3]{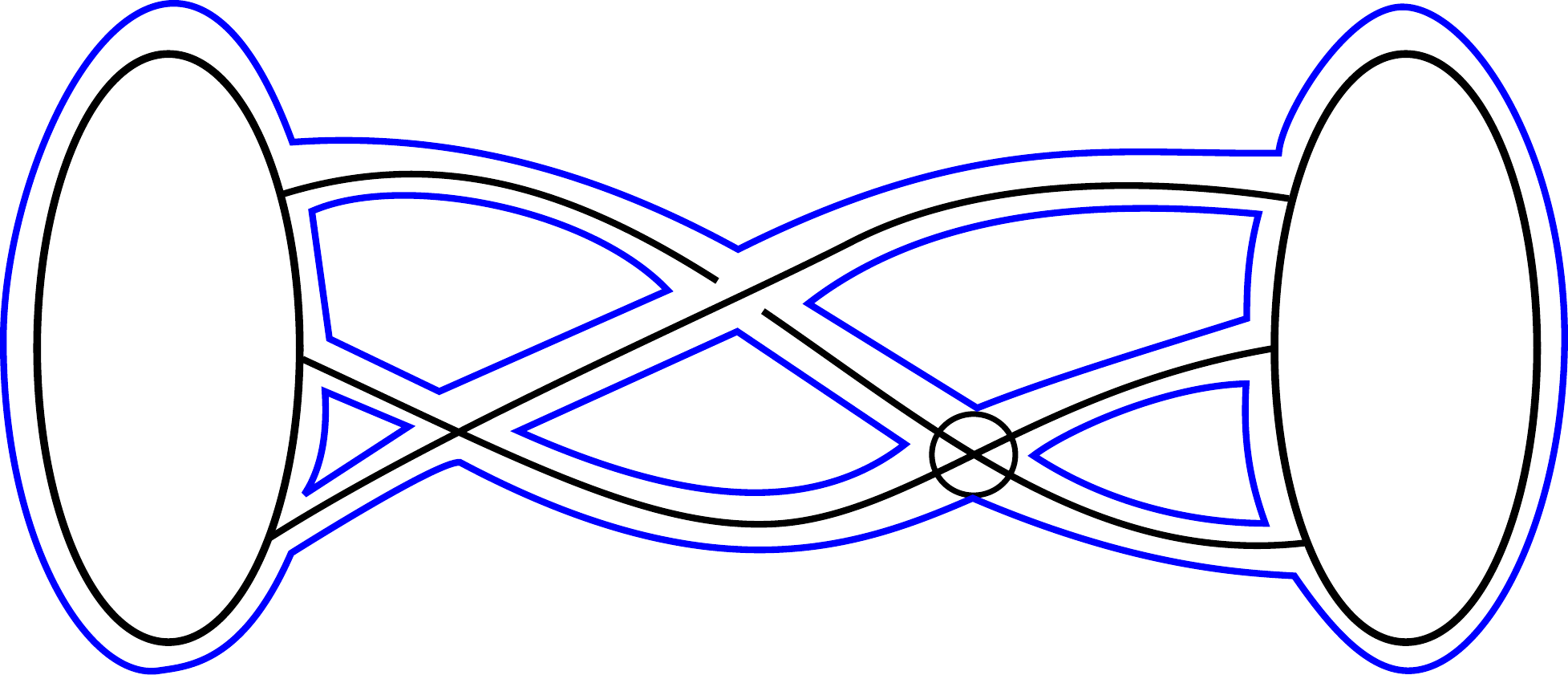} 
    \caption{The surface associated to the virtual braid diagram}
    \label{fig:proof3}
\end{figure}
	\item Consider the natural embedding of $\R^2$ in $\R^3$, this induces an embedding of the preceding surface. Perturb, in $\R^3$, a regular neighborhood of each virtual crossing in such a way that you obtain two disjoint bands. We obtain a new surface, preserving the distinguished boundary components (see Figure \ref{fig:proof4}). 
\begin{figure}[h]	
    \centering
    \includegraphics[scale=0.3]{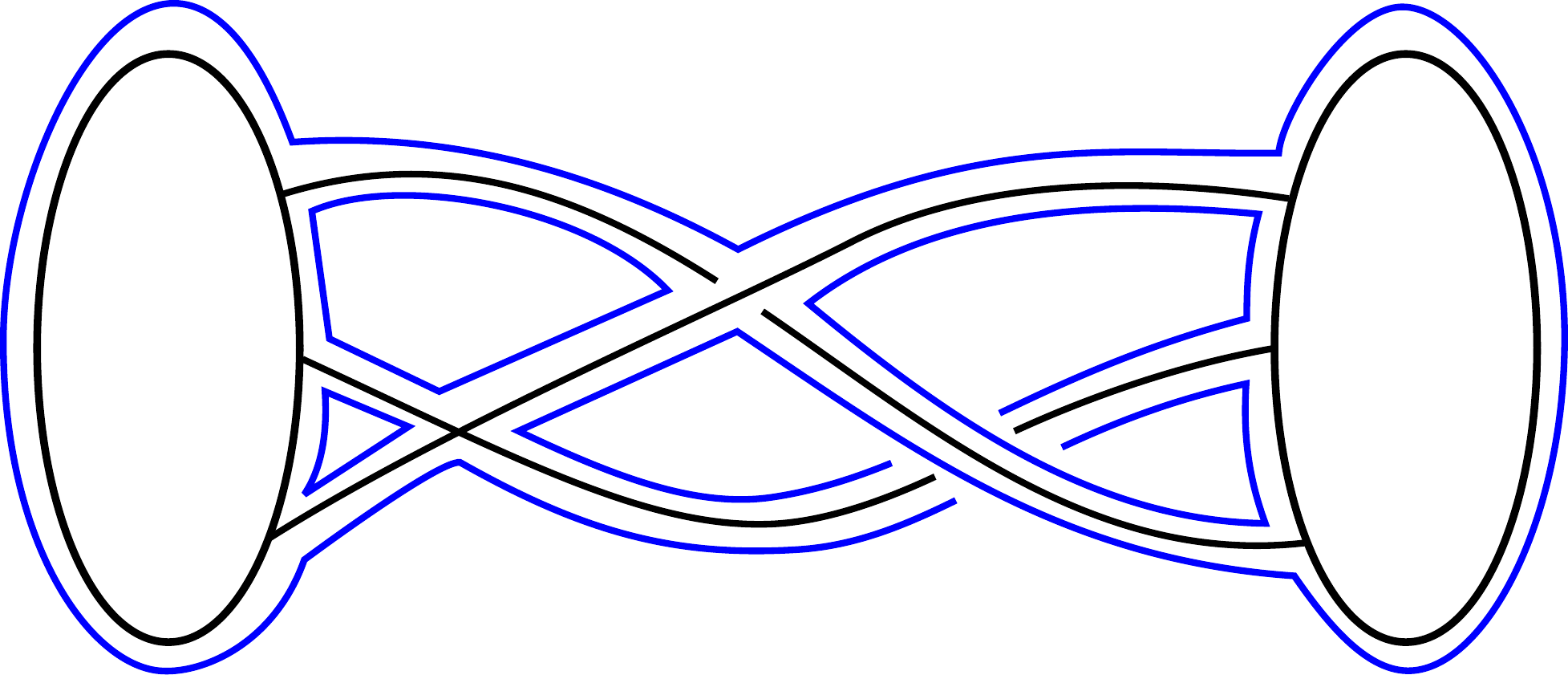} 
    \caption{The surface obtained after perturbations}
    \label{fig:proof4}
\end{figure}
	\item Consider the abstract surface obtained on the last construction and cap all the boundary components, but the distinguished boundary components. As the previous surface was oriented, we obtain an oriented surface with only two boundary components and satisfying the definition of abstract singular braid diagram. Call the abstract singular braid diagram $A(g) = (S,\beta,\epsilon)$ (see Figure \ref{fig:proof5}).
\begin{figure}[h]	
    \centering
    \includegraphics[scale=0.3]{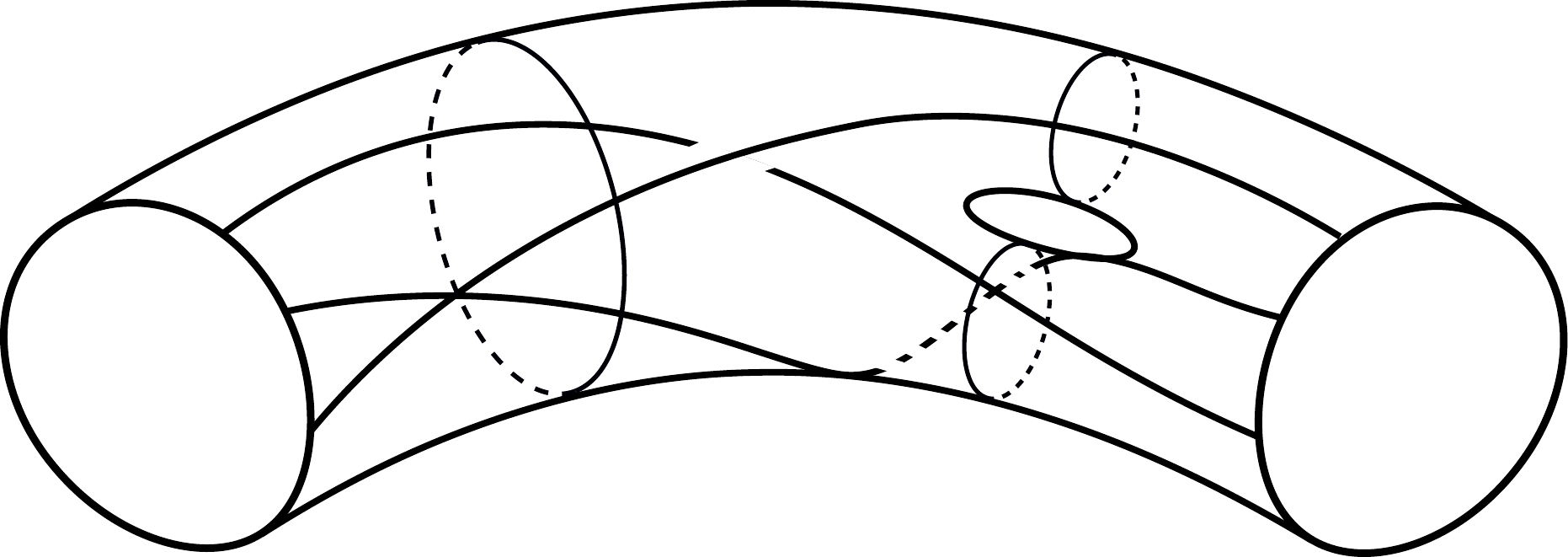} 
    \caption{The abstract braid diagram constructed from $\beta$.}
    \label{fig:proof5}
\end{figure}\end{enumerate}

Notice that for each virtual crossing we can perturb in two different ways, both are diffeomorphic, thus the surface obtained well-defined up to diffeomorphisms. Furthermore, if we choose a different singular braid diagram representing $g$, it only changes by virtual Reidemeister moves, thus when we perturb the regular neighbourhood we obtain the same surface up to diffeomorphism. From this, we have a well defined function from the set of singular horizontal Gauss diagram to the set of singular abstract braids. 

Finally, if we perform an $\Omega$--move on $g$, the associated abstract braid changes up to the correspondent Reidemeister move and (possibly) stabilization or destabilization. Thus, this defines a function $A: G_n \rightarrow SAB_n /\text{(stability)}$.

Note that $G \circ A = Id_{G_n}$ and $ A\circ G = Id_{SAB_n}$ and the bijection is given. 
\end{proof}

As an immediate consequence, we have the following proposition, which gives us a realization of singular virtual braids in a topological context :

\begin{proposition}\label{abst_bd}
	There is a bijection between the set of singular virtual braids and the set of stable classes of abstract singular braids. 
\end{proposition}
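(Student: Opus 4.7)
The plan is to observe that this proposition follows almost immediately by composing the two bijections already established in this section. Specifically, Proposition \ref{prop:GaussD} provides a bijection $B : G_n \to SVB_n$ (with inverse $G$) between horizontal Gauss diagrams and singular virtual braids, and the preceding proposition provides a bijection $A : G_n \to SAB_n / \text{(stability)}$ (with inverse $G$, overloading notation) between horizontal Gauss diagrams and stable classes of singular abstract braids. Composing these yields the desired bijection $A \circ G : SVB_n \to SAB_n / \text{(stability)}$, whose inverse is $B \circ G$.

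The first step is simply to remark that both maps are already shown to be well-defined bijections, so the composition is automatically a bijection between sets. There is no new combinatorial or topological content to verify, since the Gauss diagram $G(\beta)$ associated to a singular virtual braid diagram $\beta$ agrees (by construction) with the Gauss diagram $G(S,\beta',\epsilon)$ of the singular abstract braid diagram produced by the construction $A$ applied to the underlying Gauss diagram of $\beta$. In other words, the triangle
\begin{displaymath}
\xymatrix{
& G_n \ar[dl]_{B} \ar[dr]^{A} & \\
SVB_n \ar[rr] & & SAB_n / \text{(stability)}
}
\end{displaymath}
commutes by construction, so the horizontal arrow is forced to be a bijection.

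There is no substantive obstacle here; the only thing worth emphasising in the written proof is that the construction $A$ in the preceding proposition was in fact defined by first passing through a singular virtual braid diagram $B(g)$ and then thickening it. This means the compatibility between $B$ and $A$ is tautological rather than something requiring a separate verification, and the proposition is essentially a corollary. I would present the argument in one short paragraph, perhaps accompanied by the commutative diagram above, rather than as a full standalone proof.
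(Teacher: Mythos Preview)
Your proposal is correct and matches the paper's treatment exactly: the paper presents Proposition~\ref{abst_bd} as an immediate consequence of Proposition~\ref{prop:GaussD} together with the preceding proposition, i.e.\ as the composite of the two bijections through $G_n$, without giving any further argument. Your observation that the compatibility is tautological because $A$ was defined via $B(g)$ is a nice touch, but the paper does not even spell this out; your one-paragraph write-up would already be more detailed than what the paper provides.
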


\bibliographystyle{plain}

\end{document}